\definecolor{red}{rgb}{1,0,0}
\definecolor{blue}{rgb}{0,0,1}
\definecolor{green}{rgb}{0,.6,0}
\newtheorem{thm}{Theorem}[section]
\newtheorem{cor}[thm]{Corollary}
\newtheorem{lem}[thm]{Lemma}
\newtheorem{prop}[thm]{Proposition}
\newtheorem{obs}[thm]{Observation}
\theoremstyle{definition}
\newtheorem{rem}[thm]{Remark}
\theoremstyle{definition}
\newtheorem{defn}[thm]{Definition}
\theoremstyle{definition}
\newtheorem{ex}[thm]{Example}
\newcommand{\Z}{\operatorname{Z}}
\newcommand{\bit}{\begin{itemize}}
\newcommand{\eit}{\end{itemize}}
\newcommand{\ben}{\begin{enumerate}}
\newcommand{\een}{\end{enumerate}}
\newcommand{\beq}{\begin{equation}}
\newcommand{\eeq}{\end{equation}}
\newcommand{\bea}{\begin{eqnarray}} 
\newcommand{\eea}{\end{eqnarray}}
\newcommand{\bpf}{\begin{proof}}
\newcommand{\epf}{\end{proof}\ms}
\newcommand{\bmt}{\begin{bmatrix}}
\newcommand{\emt}{\end{bmatrix}}
\newcommand{\ms}{\medskip}
\newcommand{\lc}{\left\lceil}
\newcommand{\rc}{\right\rceil}
\newcommand{\noi}{\noindent}
\newcommand{\ceil}[1]{\lc #1 \rc}
\newcommand{\beqs}{\begin{equation*}} 
\newcommand{\eeqs}{\end{equation*}}
\newcommand{\beas}{\begin{eqnarray*}}
\newcommand{\eeas}{\end{eqnarray*}}
\newcommand{\zf}{\operatorname{\lfloor \operatorname{Z} \rfloor}}
\newcommand{\ptzf}{\operatorname{pt_{\zf}}}
\newcommand{\ptz}{\operatorname{pt_{\Z}}}
\newcommand{\thz}{\operatorname{th_{\Z}}}
\newcommand{\pt}{\operatorname{pt}}
\newcommand{\throt}{\operatorname{th}}
\newcommand{\thzf}{\operatorname{th_{\zf}}}
\newcommand{\up}[1]{^{(#1)}}
\newcommand{\floor}[1]{\lfloor #1 \rfloor}
\newcommand{\calf}{\mathcal{F}}
\title{Throttling for Zero Forcing and Variants}
\author{Joshua Carlson\thanks{Department of Mathematics, Iowa State University, Ames, IA 50011, USA, jmsdg7@iastate.edu.}}
\date{\today}
\begin{document}
\maketitle

\begin{abstract} 
Zero forcing is a process on a graph in which the goal is to force all vertices to become blue by applying a color change rule. Throttling minimizes the sum of the number of vertices that are initially blue and the number of time steps needed to color every vertex. This paper provides a new general definition of throttling for variants of zero forcing and studies throttling for the minor monotone floor of zero forcing. The technique of using a zero forcing process to extend a given graph is introduced. For standard zero forcing and its floor, these extensions are used to characterize graphs with throttling number $\leq t$ as certain minors of cartesian products of complete graphs and paths. Finally, these characterizations are applied to determine graphs with extreme throttling numbers.
\end{abstract}

\noi {\bf Keywords} Zero forcing, propagation time, throttling, minor monotone floor

\noi{\bf AMS subject classification} 05C57, 05C15, 05C50

\begin{section}{Introduction}\label{intro}
Zero forcing is a process on graphs in which an initial set of vertices is colored blue (with the remaining vertices colored white) and vertices can force white vertices to become blue according to a color change rule. When using the color change rule, the goal is to eventually color every vertex in graph. Zero forcing can be used to model graph searching \cite{searching}, the spread of information on graphs \cite{BY13Throt}, and control of quantum systems \cite{QuantControl,QuantProp}.  Naturally, it is useful to know the smallest possible size of an initial set that can be used to color all vertices in the graph blue. It is also useful to know the time it takes to complete this process (often called propagation time). The idea of throttling is to study the relationship between the size of the initial set and its propagation time. Richard Brualdi posed the problem of minimizing the sum of these two quantities in 2011 (see \cite{BY13Throt}).

Unless otherwise stated, the graphs in this paper are simple, undirected, and finite. For a graph $G$, $V(G)$ and $E(G)$ denote the sets of vertices and edges of $G$ respectively. The cardinality of $V(G)$ is often denoted as $|G|$. The \emph{(standard) color change rule} is that a blue vertex $u$ can force a white vertex $w$ to become blue if $w$ is the only white neighbor of $u$. In this case, it is said that $u$ forces $w$ which is denoted as $u \rightarrow w$. A vertex is \emph{active} if it is blue and has not yet performed a force. Note that in standard zero forcing, any vertex that performs a force becomes inactive and cannot perform another force. Let $G$ be a graph with $B \subseteq V(G)$ colored blue and $V(G) \setminus B$ colored white. If every vertex in $V(G)$ can be forced to become blue by repeatedly applying the standard color change rule, then $B$ is a \emph{(standard) zero forcing set} of $G$. The \emph{(standard) zero forcing number}, $\Z(G)$, is the minimum size of a standard zero forcing set of $G$. In \cite{AIM}, it is shown that the zero forcing number can be used to bound the minimum rank of a matrix associated with a graph. 

Zero forcing propagation is studied in \cite{Propagation}. The idea is to simultaneously perform all possible forces at each time step. Define $B\up{0} = B$ and for each $t \geq 0$, define $B\up{t+1}$ to be the set of vertices $w$ for which there exists a vertex $b \in \bigcup_{s=0}^t B\up{s}$ such that $w$ is the only neighbor of $b$ not in $\bigcup_{s=0}^t B\up{s}$. The \emph{(standard) propagation time of $B$ in $G$}, denoted $\pt(G, B)$, is the smallest integer $t'$ such that $V(G) = \bigcup_{t=0}^{t'} B\up{t}$. Propagation time is particularly important in the control of quantum systems (see \cite{QuantProp}). 

Throttling for standard zero forcing was first studied by Butler and Young in \cite{BY13Throt}. If $B$ is a zero forcing set of a graph $G$, the \emph{throttling number of $B$ in $G$} is $\throt(G, B) = |B| + \pt(G, B)$. The \emph{(standard) throttling number of $G$} is the minimum value of $\throt(G, B)$ where $B$ ranges over all zero forcing sets of $G$. For a given graph $G$ and an integer $k$, the \textsc{Zero Forcing Throttling} problem is to determine if the standard throttling number of $G$ is less than $k$. The many variations of zero forcing (see \cite{Parameters}) lead to many variations of throttling. In \cite{powerdomthrot}, it was shown that \textsc{Zero Forcing Throttling} and other variants are NP-Complete.

Commonly studied variants of zero forcing include positive semidefinite zero forcing and loop zero forcing (see \cite{Parameters}). Let $G$ be a graph. A connected component of $G$ is a maximally connected subgraph of $G$. Suppose $B$ is a set of blue vertices in $G$ and $G-B$ has $k$ separate connected components. Let $W_1, \ldots , W_k$ be the sets of (white) vertices of the connected components of $G-B$. The \emph{positive semidefinite color change rule} applies the standard color change rule in $G[W_i \cup B]$ for any $1 \leq i \leq k$. The \emph{positive semidefinite zero forcing number} of a graph $G$ is denoted $\Z_+(G)$ and the positive semidefinite throttling number (studied in \cite{PSD}) is defined analogously to standard throttling. Loop zero forcing (see \cite{Parameters}) arises by considering a graph where every vertex has a loop. The \emph{loop color change rule} for simple graphs is to apply the standard color change rule, or if every neighbor of a white vertex $w$ is blue, then $w$ can force itself to become blue. The \emph{loop zero forcing number} of a graph $G$ is denoted $\Z_{\ell}(G)$.

If $G$ and $H$ are graphs and $G$ is a subgraph of $H$, write $G \leq H$. If $G \leq H$ and $|V(G)| = |V(H)|$, $G$ is a \emph{spanning subgraph} of $H$ and $H$ is a \emph{spanning supergraph} of $G$. If $G$ is a minor of $H$, write $G \preceq H$. Note that this paper breaks the convention of using $H$ to denote a minor or subgraph of a graph $G$ because it considers many graph parameters that depend on majors or supergraphs of a given graph. For example, suppose $p$ is a graph parameter whose range is well-ordered. The \emph{minor monotone floor} of $p$ is defined as $\floor{p}(G) = \min\{p(H) \ | \ G \preceq H\}$. In \cite{Parameters}, it was shown that $\zf$, $\floor{\Z_+}$, and $\floor{\Z_{\ell}}$ are zero forcing parameters with their own unique color change rules. In particular, the \emph{$\zf$ color change rule} is to either apply the standard color change rule, or alternatively if a vertex $v$ is active and all neighbors of $v$ are blue, then $v$ can force any single white vertex $w$ to become blue. The latter condition of the $\zf$ color change rule is called ``hopping". If this condition is used, then it is said that $v$ forces $w$ by a hop. It was also shown in \cite{Parameters} that the minor monotone floors of various zero forcing parameters are related to tree-width, path-width, and proper path-width. In addition, the concepts of path-width and proper path-width were shown in \cite{ppwSearch} to have connections to search games on graphs. 

In Section \ref{UnivDefs}, a general definition of propagation and throttling is given that allows for the study of further variations. Throttling for $\zf$ is studied in Section \ref{sectTHZF} and an ``extension'' technique that can be used to characterize graphs with $\zf$ throttling number at most $t$ for a fixed positive integer $t$ is introduced. A similar characterization for standard throttling is given in Section \ref{sectStandThrot}. These characterizations are applied in Section \ref{sectApps} in order to quickly characterize graphs with extreme throttling numbers. Finally, in Section \ref{conclusion}, an observation is made about proving the complexity of $\zf$ throttling and possibilities for future work are given.

\end{section}

\begin{section}{General Propagation Time and Throttling}\label{UnivDefs}
This section gives new general definitions of propagation time and throttling for color change rules. Define an \emph{(abstract) color change rule} to be a set of conditions under which a vertex $u$ can force a white vertex $w$ to become blue in a graph whose vertices are colored white or blue. The notation $u \rightarrow w$ is used to indicate that vertex $u$ forced vertex $w$ to become blue. Let $G$ be a graph with $B \subseteq V(G)$ colored blue and $V(G) \setminus B$ colored white. Let $R$ be a given color change rule. Repeatedly apply $R$ to $G$ until it is no longer possible to do so and write down the forces $u \rightarrow w$ in the order in which they are performed. This list of forces is called a \emph{chronological list of $R$ forces of $B$} and the unordered set of forces that appear in the list is a \emph{set of $R$ forces of $B$}. Suppose $G$ is a graph and $\calf$ is a set of $R$ forces of $B \subseteq V(G)$. An \emph{$R$ forcing chain of $\calf$} is a sequence of vertices $(v_1, v_2, \ldots, v_k)$ in $G$ such that $(v_i \rightarrow v_{i+1}) \in \calf$ for each $1 \leq i \leq k-1$. An $R$ forcing chain of $\calf$ is \emph{maximal} if it is not properly contained in any other $R$ forcing chain of $\calf$. The set of vertices in $G$ that are blue after all forces in $\calf$ have been performed is an \emph{$R$ final coloring of $B$}.

\begin{rem}
Suppose $B'$ is an $R$ final coloring of a set $B \subseteq V(G)$ obtained by performing the forces in a chronological list of $R$ forces of $B$ (denoted by $\mathcal{L}$). Note that $B'$ consists of the vertices in $B$ together with all vertices that become forced in $\mathcal{L}$. Therefore, $B'$ does not depend on the chronological ordering of $\mathcal{L}$. This means that $R$ final colorings depend on sets of forces and not chronological lists of forces.
\end{rem}


Let $G$ be a graph and let $R$ be a given color change rule. An \emph{$R$ forcing set of $G$} is a set $B \subseteq V(G)$ of vertices such that $V(G)$ is an $R$ final coloring of $B$ for some set of $R$ forces. The \emph{$R$ forcing parameter}, $R(G)$, is the minimum size of an $R$ forcing set of $G$. An $R$ forcing set $B$ is a \emph{minimum $R$ forcing set of $G$} if $|B| = R(G)$. 

Note that the definition of standard propagation time of a set of vertices does not use sets of forces. This is because final colorings in standard zero forcing are unique and depend only on the initial set of blue vertices (see \cite{AIM}). However, there are variants of zero forcing that do not have unique final colorings (e.g., $\zf$ forcing). When performing a $\zf$ force by hopping, there are many choices for the white vertex that gets forced. Example 2.36 in \cite{Parameters} illustrates that it is possible to start with a blue $\zf$ forcing set $B$ and fail to color every vertex in the graph due to poor hopping choices. In this case, $B$ has at least two distinct sets of $\zf$ forces with different propagation times. This motivates the following definitions.

For a set of $R$ forces $\calf$ of $B \subseteq V(G)$, define $\calf\up{0} = B$ and for $t \geq 0$, $\calf\up{t+1}$ is the set of vertices $w$ such that the force $v \rightarrow w$ appears in $\calf$ and $w$ can be $R$ forced by $v$ if the vertices in $\bigcup_{i=0}^t \calf\up{i}$ are colored blue and the vertices in $V(G) \setminus \left(\bigcup_{i=0}^t \calf\up{i}\right)$ are colored white. The \emph{$R$ propagation time of $\calf$ in $G$}, denoted $\pt_R(G; \calf)$, is the least $t'$ such that $V(G) = \bigcup_{i=0}^{t'} \calf\up{i}$. If the $R$ final coloring induced by $\calf$ is not $V(G)$, then define $\pt_R(G;\calf) = \infty$. 
Note that $B$ is colored blue at time $0$, and for each $1 \leq t \leq \pt_R(G; \calf)$, time step $t$ takes place between time $t-1$ and time $t$ in $\calf$. A vertex in $G$ is \emph{active at time $t$} if it is blue at time $t$ and has not performed a force in time step $s$ for any $s \leq t$.

\begin{defn}\label{RPropSet}
Let $G$ be a graph with $B \subseteq V(G)$ and let $R$ be a given color change rule. The \emph{$R$ propagation time of $B$} is defined as 
\beas
\pt_R(G;B) = \min\{\pt_R(G;\calf) \ | \ \calf \text{ is set of $R$ forces of }B\}.
\eeas
\end{defn}

Note that Definition \ref{RPropSet} doesn't require the set $B$ to be an $R$ forcing set of $G$. This is because a set $\calf$ of $R$ forces that fails to color every vertex in $G$ has $\pt_R(G; \calf) = \infty$. Therefore, such a set $\calf$ does not realize $\pt_R(G; B)$ when $B$ is an $R$ forcing set of $G$. If $B$ is not an $R$ forcing set of $G$, then every set of $R$ forces of $B$ has infinite propagation time and $\pt_R(G;B) = \infty$. Another advantage of Definition \ref{RPropSet} is that it is not required to prove that a subset of vertices is an $R$ forcing set before discussing its propagation time. This is useful for proving Proposition \ref{ptProp} in the next section.

The (standard) propagation time of a graph (see \cite{Propagation}) considers the smallest propagation time among minimum zero forcing sets. The next definition generalizes this idea.

\begin{defn}\label{ptRgraph}
Let $G$ be a graph and let $R$ be a given color change rule. The \emph{$R$ propagation time of $G$} is defined as 
\beas
\pt_R(G) = \min\{\pt_R(G;B) \ | \ B \text{ is a minimum $R$ forcing set of }G\}.
\eeas
\end{defn}

\begin{defn}\label{RThrotSet}
Let $G$ be a graph with $B \subseteq V(G)$ and let $R$ be a given color change rule. The \emph{$R$ throttling number of $B$ in $G$} is 
\beas 
\throt_R(G;B) = |B| + \pt_R(G;B).
\eeas
\end{defn}

\begin{defn}\label{RThrotNum}
Let $G$ be a graph and let $R$ be a given color change rule. The \emph{$R$ throttling number} of $G$ is defined as 
\beas 
\throt_R(G) = \underset{B \subseteq V(G)}{\min}\{\throt_R(G;B)\}.
\eeas
\end{defn}

When comparing propagation time and throttling for various color change rules, $\Z$ is used to denote the standard zero forcing color change rule (i.e., $\ptz$ and $\thz$). 

\end{section}

\begin{section}{Throttling for the Minor Monotone Floor of Z.}\label{sectTHZF}
This section investigates propagation and throttling for the $\zf$ color change rule. Definition \ref{RPropSet} exhibits the connection between the $\zf$ propagation time of a subset $B \subseteq V(G)$ and the $\zf$ propagation time of a set of $\zf$ forces of $B$. The following proposition shows that the $\ptzf(G;B)$ can also be calculated by minimizing the standard zero forcing propagation time of $B$ on spanning supergraphs of $G$.

\begin{prop}\label{ptProp}
If $G$ is a graph and $B \subseteq V(G)$, then
\begin{eqnarray}
\pt_{\zf}(G;B) = \min\{\ptz(H;B) \ | \ G \leq H \text{ and }|G| = |H| \}. \label{eq1}
\end{eqnarray}
\end{prop}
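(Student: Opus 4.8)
The plan is to prove the two inequalities $\le$ and $\ge$ separately, translating between the two models via the principle that a hop performed by a vertex $v$ in $G$ corresponds exactly to using one added edge at $v$ in a spanning supergraph $H$. The guiding observation is that $v$ may hop in $G$ precisely when every neighbor of $v$ in $G$ is already blue; in that situation, adjoining the edge from $v$ to its hop target $w$ makes $w$ the unique white neighbor of $v$ in $H$, so the hop becomes an ordinary standard force in $H$, and conversely a standard force in $H$ across an edge not present in $G$ is exactly a hop in $G$.

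For the inequality $\min\{\ptz(H;B)\}\le\pt_{\zf}(G;B)$, I would first dispose of the case $\pt_{\zf}(G;B)=\infty$ (trivial) and otherwise fix a set $\calf$ of $\zf$ forces of $B$ with $\pt_{\zf}(G;\calf)=\pt_{\zf}(G;B)=:t$; since $t$ is finite, $\calf$ colors all of $V(G)$. I would then build $H$ from $G$ by adjoining, for every hop $v\to w$ in $\calf$, the edge $vw$. Two structural facts keep this clean: each vertex performs at most one force and is forced at most once (so each vertex meets at most one added edge as a source and at most one as a target), and a hop target is never already adjacent to its source (so the adjoined edges are genuinely new). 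Reinterpreting each hop of $\calf$ as a standard force, the claim is that $\calf$ is a set of standard forces of $B$ in $H$, whence $B$ is a standard forcing set of $H$, and $\ptz(H;\calf)\le t$.

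For the reverse inequality $\pt_{\zf}(G;B)\le\ptz(H;B)$ for every spanning supergraph $H$, I would fix $H$ with $\ptz(H;B)=:s<\infty$ (otherwise nothing to prove) and take a set $\calf'$ of standard forces of $B$ in $H$ with $\ptz(H;\calf')=s$. Each force $u\to w$ of $\calf'$ uses an edge of $H$: if that edge lies in $G$ I keep it as a standard force, and if not I reinterpret it as a hop of $u$ in $G$. The hop is legitimate because $w$ is the unique white neighbor of $u$ in $H$ and $w\notin N_G(u)$, so every neighbor of $u$ in $G$ is already blue. Collecting these reinterpreted forces into a set $\calf$ of $\zf$ forces of $B$ in $G$, the claim is that $\pt_{\zf}(G;\calf)\le s$, hence $\pt_{\zf}(G;B)\le s$.

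The crux in both directions is not the reinterpretation of a single force but the preservation of timing, and this is where the real work lies. I would handle it with an induction on the time step showing a containment of the accumulated blue sets in the appropriate direction (the standard process on $H$ never lagging behind the $\zf$ process on $G$ for the first inequality, and vice versa for the second); note that only a containment, not exact equality of the per-step sets, is needed. The key observation driving the induction is that the one potentially problematic incident edge — an added edge $xv$ arising because $v$ is itself a hop target, or a surplus $H$-edge at $u$ — always connects to a vertex that is already blue at the moment the force is to be performed, since a forcing vertex must already be blue and its own hop predecessor was blue strictly earlier. Consequently the white-neighbor count governing forceability agrees in the two models at every relevant coloring, so each scheduled force fires no later in the target model than in the source model. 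I expect this bookkeeping — tracking exactly which incident edges are present and verifying their far endpoints are blue at the right times — to be the main obstacle; the equality of the two minimum values then follows by combining the inequalities, with the infinite cases absorbed automatically since a non-forcing configuration contributes $\infty$ on both sides.
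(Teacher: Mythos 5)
Your proposal is correct and follows essentially the same route as the paper's proof: both directions translate hops into added edges of a spanning supergraph (and back), reusing the same set of forces and checking that propagation time is preserved. The only difference is that you make the timing-preservation step explicit via an induction on accumulated blue sets, which the paper leaves largely implicit; this is a sound elaboration, not a different argument.
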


\begin{proof}
Let $\calf$ be a set of $\zf$ forces of $B$ such that $\ptzf(G;B) = \ptzf(G; \calf)$. Note that every force in $\calf$ is either a $\Z$ force or a force by a hop. Let $G'$ be the graph obtained from $G$ by adding the edges $uw$ such that $u \rightarrow w$ appears in $\calf$ and $u \rightarrow w$ by a hop. Note that for each edge $uw \in E(G') \setminus E(G)$, $w$ is the only white neighbor of $u$ in $G'$ and $u$ is active at the time that $u \rightarrow w$ in $\calf$. This means that $u \rightarrow w$ is a valid $\Z$ force in $G'$ for each such edge. Thus, $\calf$ is a set of $\Z$ forces of $B$ in $G'$ and $\pt_{Z}(G'; \calf) = \ptzf(G; \calf)$. Therefore, 
\beas 
\ptzf(G;B) = \pt_{Z}(G'; \calf) \geq \min\{\ptz(H;B) \ | \ G \leq H \text{ and }|G| = |H| \} .
\eeas
Now let $H'$ be a spanning supergraph of $G$ such that the right hand side of \eqref{eq1} is equal to $\ptz(H', B)$. Let $\calf$ be a set of $\Z$ forces of $B$ such that $\ptz(H', \calf) = \ptz(H', B)$. Consider applying $\calf$ to $B$ in $G$ and hopping when an edge is missing. If $(u \rightarrow w) \in \calf$ and $uw \in E(H') \setminus E(G)$, then $u$ can $\zf$ force $w$ in $H' - uw$ by a hop when $u \rightarrow w$ in $\calf$. If $(u \rightarrow w) \in \calf$ and $uw \notin E(H') \setminus E(G)$, then $u$ will $\Z$ force $w$ in $G$ exactly the way $u \rightarrow w$ in $H'$. If $(u \rightarrow w) \notin \calf$, then the propagation time of $\calf$ does not change regardless of whether $uw$ is removed from $H'$ to obtain $G$. This means that $\calf$ is a set of $\zf$ forces of $B$ in $G$ with $\ptzf(G;\calf) = \ptz(H';\calf)$. Thus,
\beas 
\ptzf(G;B) \leq \ptzf(G;\calf) = \ptz(H', B) = \min \{\ptz(H;B) \ | \ G \leq H \text{ and }|G| = |H| \}.
\qedhere
\eeas 
\end{proof}

By the definition of minor monotone floor given in Section \ref{intro}, $\zf$ is minor monotone (i.e., $\zf(G) \leq \zf(H)$ if $G \preceq H$). Since any $\Z$ forcing set of a graph $G$ is also a $\zf$ forcing set of $G$, $\zf$ is bounded above by $\Z$. These facts together with Definitions \ref{ptRgraph}, \ref{RThrotSet}, and \ref{RThrotNum} can be used to extend the above proposition and give similar results for the $\zf$ propagation time of a graph and $\zf$ throttling.

\begin{cor}
Let $G$ be a graph. Then
\beas 
\pt_{\zf}(G) = \min\{\ptz(H) \ | \ G \leq H \text{ with } |G| = |H| \text{ and } \zf(G)=\Z(H) \}. 
\eeas
\end{cor}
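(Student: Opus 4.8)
The plan is to prove the two inequalities separately, in each case using Proposition \ref{ptProp} to pass between $\zf$ propagation on $G$ and standard propagation on spanning supergraphs of $G$, together with the two facts recorded just before this corollary: that $\zf$ is minor monotone and that $\zf \leq \Z$.

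For the inequality $\ptzf(G) \leq \min\{\ptz(H) \mid G \leq H,\ |G|=|H|,\ \zf(G)=\Z(H)\}$, I would begin with a spanning supergraph $H$ realizing the right-hand side, together with a minimum $\Z$ forcing set $B$ of $H$ satisfying $\ptz(H) = \ptz(H;B)$; here $|B| = \Z(H) = \zf(G)$. Since $G \leq H$ and $|G| = |H|$, Proposition \ref{ptProp} gives $\ptzf(G;B) \leq \ptz(H;B) = \ptz(H) < \infty$. Finiteness forces $B$ to be a $\zf$ forcing set of $G$, and $|B| = \zf(G)$ makes it a minimum one, so $\ptzf(G) \leq \ptzf(G;B) \leq \ptz(H)$. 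Taking the minimum over all admissible $H$ finishes this direction.

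For the reverse inequality, I would take a minimum $\zf$ forcing set $B$ of $G$ with $\ptzf(G) = \ptzf(G;B)$, so that $|B| = \zf(G)$. Proposition \ref{ptProp} then supplies a spanning supergraph $H'$ of $G$ with $\ptzf(G;B) = \ptz(H';B)$; in particular $B$ is a $\Z$ forcing set of $H'$, whence $\Z(H') \leq |B| = \zf(G)$. The crux is a squeeze argument: because $G$ is a subgraph and hence a minor of $H'$, minor monotonicity yields $\zf(G) \leq \zf(H')$, while the bound $\zf \leq \Z$ yields $\zf(H') \leq \Z(H')$; chaining these gives $\zf(G) \leq \zf(H') \leq \Z(H') \leq |B| = \zf(G)$, so every inequality is an equality and in particular $\Z(H') = \zf(G)$. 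Thus $H'$ is admissible for the right-hand side, $B$ is a minimum $\Z$ forcing set of $H'$, and $\ptz(H') \leq \ptz(H';B) = \ptzf(G)$, giving $\min\{\ptz(H) \mid \ldots\} \leq \ptzf(G)$. Combining the two directions yields equality.

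I expect the main obstacle to be verifying that the side condition $\zf(G) = \Z(H)$ is calibrated correctly: the supergraph $H'$ produced by Proposition \ref{ptProp} must simultaneously satisfy this constraint \emph{and} admit $B$ as a genuinely \emph{minimum} $\Z$ forcing set, so that $\ptz(H')$ is actually attained along the way. The squeeze argument above settles both of these at once and is the only genuinely non-routine step; the remaining bookkeeping (finiteness of propagation time implying $B$ is a forcing set, and a spanning subgraph being a minor) is immediate.
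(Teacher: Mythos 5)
Your argument is correct and rests on exactly the same two ingredients as the paper's proof: Proposition \ref{ptProp} to move between $\zf$ propagation on $G$ and standard propagation on spanning supergraphs, and the squeeze $\zf(G) \leq \zf(H) \leq \Z(H) \leq |B|$ to show that the side condition $\zf(G) = \Z(H)$ is automatic and that $B$ is a minimum zero forcing set of $H$. The paper packages this as a single chain of equalities between nested minima rather than two separate inequalities, but the content is identical.
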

\begin{proof}
Let $H$ be a spanning supergraph of $G$ with $B$ a standard zero forcing set of $H$. Then, $\zf(G) \leq \zf(H) \leq \Z(H) \leq |B|$. Therefore, assuming that $|B| = \zf(G)$ gives $|B| = \Z(H)$ which means that $B$ is a minimum zero forcing set of $H$. By Proposition \ref{ptProp}, it follows that 
\beas 
\ptzf(G) &=& \min\{\ptzf(G; B) \ | \ \zf(G) = |B|\}\\[.5 em]
&=& \min\{\min\{\ptz(H; B) \ | \ G \leq H \text{ and } |G| = |H|\} \ | \ \zf(G) = |B|\}\\[.5 em]
&=& \min\{\ptz(H; B) \ | \ G \leq H \text{ with } |G| = |H| \text{ and } \zf(G) = |B|\}\\[.5 em]
&=& \min\{\ptz(H) \ | \ G \leq H \text{ with } |G| = |H| \text{ and } \zf(G)=\Z(H)\}. \qedhere
\eeas 

\end{proof}

\begin{cor}
If $G$ is a graph and $B \subseteq V(G)$, then 
\beas 
\thzf(G; B) = \min\{\thz(H;B) \ | \ G \leq H \text{ and }|G| = |H| \}.
\eeas
\end{cor}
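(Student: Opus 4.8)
The plan is to obtain this as an immediate consequence of Proposition \ref{ptProp}, since the throttling number exceeds the propagation time by the additive constant $|B|$. First I would expand the left-hand side using Definition \ref{RThrotSet} with $R = \zf$, writing $\thzf(G;B) = |B| + \ptzf(G;B)$, and then substitute the identity of Proposition \ref{ptProp} for the term $\ptzf(G;B)$ to obtain
\beas
\thzf(G;B) = |B| + \min\{\ptz(H;B) \ | \ G \leq H \text{ and } |G| = |H| \}.
\eeas

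The main step is to move the constant $|B|$ inside the minimization. Every competing graph $H$ satisfies $G \leq H$ and $|G| = |H|$, so $V(H) = V(G)$; hence $B \subseteq V(H)$ is the same set of the same size for each such $H$, and in particular $|B|$ is independent of the choice of $H$. Since adding a fixed constant commutes with taking a minimum, this yields
\beas
\thzf(G;B) = \min\{|B| + \ptz(H;B) \ | \ G \leq H \text{ and } |G| = |H| \}.
\eeas

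Finally I would apply Definition \ref{RThrotSet} once more, this time with $R = \Z$, to recognize that $|B| + \ptz(H;B) = \thz(H;B)$, which gives the claimed equality. I expect no real obstacle in this argument: the only point deserving a remark is that the vertex set is identical across all spanning supergraphs $H$, so that $|B|$ is genuinely constant across the family being minimized over and may therefore be pulled through the minimum.
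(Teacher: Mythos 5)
Your argument is correct and is exactly the route the paper intends: the corollary is stated without proof as an immediate consequence of Proposition \ref{ptProp} and Definition \ref{RThrotSet}, obtained by adding the constant $|B|$ (which is indeed fixed across all spanning supergraphs $H$ since $V(H)=V(G)$) and pulling it through the minimum. Nothing further is needed.
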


\begin{cor}\label{zfThrotSpanSupers}
Let $G$ be a graph. Then
\beas 
\thzf(G) = \min\{\thz(H) \ | \ G \leq H \text{ and }|G| = |H| \}.
\eeas
\end{cor}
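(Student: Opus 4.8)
The plan is to derive the statement directly from Definition \ref{RThrotNum} and the corollary immediately preceding it, essentially by interchanging two nested minimizations.

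First I would unfold the definition of the $\zf$ throttling number. By Definition \ref{RThrotNum},
\beas
\thzf(G) = \min_{B \subseteq V(G)} \thzf(G;B).
\eeas
I would then substitute the preceding corollary, which expresses $\thzf(G;B) = \min\{\thz(H;B) \ | \ G \leq H \text{ and } |G| = |H|\}$, to obtain
\beas
\thzf(G) = \min_{B \subseteq V(G)} \min\{\thz(H;B) \ | \ G \leq H \text{ and } |G| = |H|\}.
\eeas

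The key step is to swap the order of minimization. Since the quantity $\thz(H;B)$ depends on both arguments, the two nested minima may be combined into a single minimum over all admissible pairs $(H, B)$ and then re-grouped so that the spanning supergraph $H$ is the outer variable:
\beas
\thzf(G) = \min\{\, \min_{B \subseteq V(G)} \thz(H;B) \ | \ G \leq H \text{ and } |G| = |H|\,\}.
\eeas
Here I would invoke the crucial observation that any spanning supergraph $H$ of $G$ has $V(H) = V(G)$, so ranging $B$ over $V(G)$ is identical to ranging $B$ over $V(H)$. Consequently the inner minimum is exactly $\thz(H) = \min_{B \subseteq V(H)} \thz(H;B)$ by Definition \ref{RThrotNum} applied to $H$, which yields
\beas
\thzf(G) = \min\{\thz(H) \ | \ G \leq H \text{ and } |G| = |H|\},
\eeas
as desired.

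I do not expect a genuine obstacle here, since the argument is a formal manipulation of nested minima. The one point that deserves care is confirming that the index sets of the two minimizations are compatible so that the swap is legitimate, namely that $B$ ranges over the same vertex set $V(G) = V(H)$ in both the $\zf$ and the $\Z$ settings; this is exactly what the spanning condition $|G| = |H|$ guarantees. It is also worth remarking that any subset $B$ failing to be a forcing set contributes value $\infty$ and therefore never influences any of these minima (the set $V(G)$ is always a forcing set, so each minimum is finite), which is what justifies taking the minima over all subsets rather than only over forcing sets.
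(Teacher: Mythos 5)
Your proof is correct and matches the paper's intended argument: the paper states this corollary without proof, treating it as immediate from the preceding corollary ($\thzf(G;B) = \min\{\thz(H;B) \mid G \leq H,\ |G|=|H|\}$) together with Definition \ref{RThrotNum}, which is exactly the nested-minimum swap you carry out. Your added remarks about $V(G)=V(H)$ making the index sets compatible and about $V(G)$ always being a forcing set (so the minima are finite) are the right points to check and are handled correctly.
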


\begin{thm}\label{zfThrotSubMon}
The $\zf$ throttling number is subgraph monotone. In particular, if $G$ and $H$ are graphs with $G \leq H$, then $\thzf(G) \leq \thzf(H)$.
\end{thm}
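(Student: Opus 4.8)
The plan is to reduce to two elementary operations and verify monotonicity under each: deleting a single edge and deleting a single vertex. Since any subgraph $G \leq H$ is obtained from $H$ by successively deleting the edges of $E(H)\setminus E(G)$ and the vertices of $V(H)\setminus V(G)$, and the order $\leq$ on integers composes through these steps, it suffices to treat $G = H - e$ and $G = H - v$. Edge deletion is immediate from Corollary \ref{zfThrotSpanSupers}: every spanning supergraph of $H$ is also a spanning supergraph of $H - e$ (it has the same vertex set and contains $H$, hence contains $H-e$), so the minimum defining $\thzf(H-e)$ ranges over a set that includes the one defining $\thzf(H)$; therefore $\thzf(H-e) \leq \thzf(H)$.

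Vertex deletion is the main case, and I would argue it directly with a set of $\zf$ forces rather than through the corollary (the corollary route would reduce to vertex-deletion monotonicity of standard throttling, which lacks the hopping flexibility that makes the argument work). Fix $B \subseteq V(H)$ and a set of $\zf$ forces $\calf$ with $\thzf(H) = |B| + \ptzf(H;\calf)$ and $\ptzf(H;\calf)=p$. Since each vertex is forced at most once and performs at most one force, $\calf$ partitions $V(H)$ into forcing chains, so $v$ has a well-defined predecessor $u$ (when $v \notin B$) and successor $w$ (when $v$ performs a force). The central observation is that deleting $v$ relaxes the applicability condition of \emph{every} force in $\calf$ except the one performed by $v$: for a standard force $a \to b$ with $a,b \neq v$, the vertex $b$ is still the unique white neighbor of $a$ in $H-v$, because if $v$ were adjacent to $a$ it was necessarily blue at that moment; and a hop is only made easier. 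Thus the single force $v \to w$ is all that must be repaired.

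To repair it I would reconnect the chain by a hop, which is legitimate precisely because hops require no edge. If $v$ has a predecessor $u$, then at the step when $u$ forced $v$ every other neighbor of $u$ was already blue (this holds whether $u \to v$ was a standard force or itself a hop), so in $H-v$ the vertex $u$ may instead hop directly to $w$; I replace $u \to v$ and $v \to w$ by the single hop $u \to w$ and keep $B' = B$. If instead $v \in B$, I set $B' = (B \setminus \{v\}) \cup \{w\}$, promoting $w$ to an initial vertex. If $v$ performs no force I simply delete it, together with the force into it if there is one. In every case $|B'| \leq |B|$, and the modified collection $\calf'$ of forces covers all of $V(H)\setminus\{v\}$.

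The main obstacle, and the step I would treat most carefully, is verifying that $\ptzf(H-v;\calf') \leq p$. I would prove by induction on the time step that every vertex of $V(H)\setminus\{v\}$ becomes blue under $\calf'$ no later than it did under $\calf$; deletion only ever relaxes conditions, so nothing is delayed. The one nonroutine point is validating the replacement hop $u \to w$ at the original time at which $u \to v$ occurred, which is exactly the observation that $u$'s surviving neighbors were all already blue then (and that $w$, whose only incoming force in $\calf'$ is $u \to w$, is still white). Granting this, $\thzf(H-v) \leq |B'| + \ptzf(H-v;\calf') \leq |B| + p = \thzf(H)$, which settles the vertex-deletion case and hence the theorem.
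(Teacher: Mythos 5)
Your proof is correct and follows essentially the same strategy as the paper's: reduce to spanning-subgraph monotonicity via Corollary \ref{zfThrotSpanSupers} plus single-vertex deletion, and handle the latter by splicing the forcing chain through $v$ back together with a replacement hop $u \rightarrow w$. The only real difference is that the paper first deletes all edges incident with $v$ (permitted by the corollary) so that $v$ is isolated and every force touching it is already a hop, making the chain surgery immediate, whereas you work in $H$ directly and therefore need --- and correctly supply --- the additional observation that any standard force $a \rightarrow b$ with $a, b \neq v$ survives the deletion of $v$, since $v$, as a neighbor of $a$, must already be blue at that moment.
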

\begin{proof}
Let $H$ be a graph. By Corollary \ref{zfThrotSpanSupers}, $\thzf(G') \leq \thzf(H)$ for any spanning subgraph $G'$ of $H$. Let $v \in V(H)$ and let $E(v)$ be the set of all edges in $H$ incident with $v$. Define $G' = H - E(v)$. Note that $\thzf(G') \leq \thzf(H)$. Choose $B' \subseteq V(G')$ such that $\thzf(G'; B') = \thzf(G')$. Let $\calf'$ be a set of $\zf$ forces of $G'$ with $\ptzf(G'; \calf') = \ptzf(G'; B')$. The goal is to produce a set $B \subseteq V(G'-v)$ and a set of $\zf$ forces, $\calf$, of $B$ such that $|B| \leq |B'|$ and $\ptzf(G'-v, \calf) \leq \ptzf(G'; \calf')$. Let $v_1 \rightarrow v_2 \rightarrow \cdots \rightarrow v_k$ be the maximal $\zf$ forcing chain of $\calf'$ that contains $v$. If $k=1$, then it suffices to choose $B = B'\setminus\{v\}$ and $\calf = \calf'$. Now assume $k > 1$. Note that $v = v_i$ for some $1 \leq i \leq k$. Define $B$ and $\calf$ as 
\beas 
B = 
\begin{cases}
(B' \setminus \{v_i\})\cup \{v_{i+1}\} & \text{if } i = 1,\\
B' & \text{otherwise},
\end{cases}
\eeas
and
\beas 
\calf =
\begin{cases}
\calf' \setminus \{v_{i} \rightarrow v_{i+1}\} & \text{if } i = 1,\\
(\calf' \setminus \{v_{i-1} \rightarrow v_i, v_i \rightarrow v_{i+1}\}) \cup \{v_{i-1} \rightarrow v_{i+1}\} & \text{if } 1 < i < k,\\
\calf' \setminus \{v_{i-1} \rightarrow v_{i}\} & \text{if } i = k.
\end{cases}
\eeas

Recall that $v$ is an isolated vertex in $G'$. So when $1 < i < k$, $v_{i-1} \rightarrow v_i$ and $v_i \rightarrow v_{i+1}$ by hopping in $G'$. This means at the time that $v_{i-1} \rightarrow v_i$ in $G'$, $v_{i-1}$ can force $v_{i+1}$ by a hop in $G'-v$. In the other cases, simply remove the appropriate force from $\calf'$. So in all cases, it is clear that $|B| \leq |B'|$ and $\ptzf(G'-v; \calf) \leq \ptzf(G'; \calf')$. Also note that $G' - v = H - v$. Thus, for all $1 \leq i \leq k$,
\beas 
\thzf(H - v) &\leq& |B| + \ptzf(G'-v; \calf) \leq |B'| + \ptzf(G'; \calf') = \thzf(G') \leq \thzf(H).
\eeas
Since $v$ was chosen arbitrarily, it follows that removing vertices from $H$ will not increase the $\zf$ throttling number.
\end{proof}

Since $\zf$ is minor monotone, it is natural to ask if Theorem \ref{zfThrotSubMon} can be strengthened to say that $\thzf$ is minor monotone. This question is answered negatively (see Theorem \ref{thzfNotMinMon}) once a characterization of $\thzf$ is obtained. Note that Theorem \ref{zfThrotSubMon} can be extended in other ways. For each $p \in \{\Z_+, \Z_{\ell}\}$, the color change rule for $\floor{p}$ takes the color change rule for $p$ and allows hopping. This leads to the following corollary.

\begin{cor}\label{otherP}
Suppose $G$ is a graph and $B \subseteq V(G)$. Then for each $p \in \{Z_+, Z_{\ell}\}$, 
\beas 
\pt_{\floor{p}}(G;B) &=& \min\{\pt_p(H;B) \ | \ G \leq H \text{ and }|G| = |H| \},\\[10 pt]
\throt_{\floor{p}}(G;B) &=& \min\{\throt_p(H;B) \ | \ G \leq H \text{ and }|G| = |H| \},
\eeas 
and $\throt_{\floor{p}}$ is subgraph monotone.
\end{cor}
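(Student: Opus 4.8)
The plan is to establish the three assertions in the order they build on one another: first the propagation-time identity, then the throttling identity (which is immediate from it), and finally subgraph monotonicity. For the propagation-time identity I would imitate the two-sided argument of Proposition~\ref{ptProp} verbatim, replacing the standard rule $\Z$ by $p$ and $\zf$ by $\floor{p}$. In the ``$\geq$'' direction, I start from a set of $\floor{p}$ forces $\calf$ of $B$ realizing $\pt_{\floor{p}}(G;B)$, form the spanning supergraph $G'$ by adding an edge $uw$ for each force $u\to w$ performed by a hop, and check that $\calf$ becomes a set of $p$ forces of $B$ in $G'$ with the same propagation time. In the ``$\leq$'' direction, I take a spanning supergraph $H'$ attaining the minimum together with an optimal set of $p$ forces $\calf$ on $H'$, and reinterpret each force $u\to w$ on $G$ as the same $p$ force when $uw\in E(G)$ and as a hop when $uw\in E(H')\setminus E(G)$.

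Once the propagation-time identity holds, the throttling set-identity is purely formal: since $\throt_{\floor{p}}(G;B)=|B|+\pt_{\floor{p}}(G;B)$ and $|B|$ does not depend on the chosen supergraph, adding the constant $|B|$ to both sides of the propagation-time identity gives the stated expression for $\throt_{\floor{p}}(G;B)$. Interchanging the two minima (over $B\subseteq V(G)=V(H)$ and over spanning supergraphs $H$) yields the graph-level identity $\throt_{\floor{p}}(G)=\min\{\throt_p(H)\mid G\leq H,\ |G|=|H|\}$, the exact analogue of Corollary~\ref{zfThrotSpanSupers}. I would then obtain subgraph monotonicity of $\throt_{\floor{p}}$ by transcribing the proof of Theorem~\ref{zfThrotSubMon}: reduce to spanning subgraphs using the graph-level identity, delete the edges at a chosen vertex $v$ to isolate it, and perform the forcing-chain surgery on the maximal chain through $v$. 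The surgery manipulates only forces into and out of the now-isolated $v$, which must be hops; since the hop condition (a vertex is active with all neighbors blue) is identical for $\floor{\Z_+}$ and $\floor{\Z_{\ell}}$ and is purely local, rerouting $v_{i-1}\to v_{i+1}$ is a valid hop at the same time step, and this step transfers without change.

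The one genuinely delicate point---and the main obstacle---is verifying that the force translations in the propagation-time identity respect the $p$ color change rule, and here $\Z_{\ell}$ and $\Z_+$ behave very differently. For $\Z_{\ell}$ the loop rule is \emph{local}: a self-force of $w$ depends only on the colors in the closed neighborhood of $w$. Consequently adding hop edges only introduces blue neighbors (so existing loop forces and self-forces remain valid), and passing to a spanning subgraph only removes neighbors (so a force whose target was the unique white neighbor either survives as a loop force or, when its edge is deleted, becomes a valid hop); the proof of Proposition~\ref{ptProp} carries over essentially verbatim. For $\Z_+$, however, the rule is \emph{non-local}: validity of a force is tested inside the white component containing the target, and this component structure changes when edges are added or deleted.

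I expect this $\Z_+$ case to be where the real work lies. In the ``$\leq$'' direction, deleting $uw$ refines the white components, so a force $u\to w$ whose target was the unique white neighbor of $u$ in its component survives as a $\Z_+$ force when $uw\in E(G)$; but when $uw\notin E(G)$, converting it to a hop requires \emph{every} neighbor of $u$ to be blue, and $u$ may retain a white neighbor lying in a different white component of $H'-B$, which a hop cannot tolerate. Dually, in the ``$\geq$'' direction a hop edge added for a hop performed at a later time can join two vertices that are still white at an earlier step, merging white components and potentially invalidating an earlier $\Z_+$ force. The key observation to exploit is that every hop edge joins a vertex that is blue at the moment of its hop to a white target, so it does not merge white components \emph{at that moment}; I would make this precise with a per-time-step component analysis, and where cross-component white neighbors obstruct a direct hop, reroute the offending forces by forcing-chain surgery in the spirit of Theorem~\ref{zfThrotSubMon}. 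Confirming that these reroutings never increase the propagation time is the crux of the $\Z_+$ argument.
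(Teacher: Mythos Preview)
The paper gives no proof: the corollary is stated immediately after the sentence ``the color change rule for $\floor{p}$ takes the color change rule for $p$ and allows hopping,'' with the implicit claim that Proposition~\ref{ptProp} and Theorem~\ref{zfThrotSubMon} transfer verbatim. Your plan is exactly that transfer, so at the level of strategy you and the paper agree.

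Your hesitation over $Z_+$ is well founded, and the obstruction is sharper than something rerouting can fix. If ``hopping'' for $\floor{Z_+}$ means literally the $\zf$ hop (active with all neighbors blue), the first displayed identity is false. Let $G$ be the path $a\!-\!b\!-\!c$ together with an isolated vertex $d$, and take $B=\{b\}$. In $G$ the vertex $b$ cannot hop to $d$ at step~$1$ (its neighbors $a,c$ are white), and after $Z_+$-forcing $a$ and $c$ it is no longer active; one checks $\pt_{\floor{Z_+}}(G;B)=2$. Yet the spanning supergraph $K_{1,3}$ gives $\pt_{Z_+}(K_{1,3};B)=1$, so the identity would read $2=1$. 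No forcing-chain surgery on the $G$ side repairs this, because the $\zf$ hop condition (``all neighbors blue'') is strictly stronger than what a $Z_+$ force across an added edge actually requires (``no white neighbor in the target's component'').

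What the paper's one-line justification elides is that the $\floor{Z_+}$ rule in \cite{Parameters} does not use the global $\zf$ hop; it uses a component-adapted version in which a blue $u$ may force a white $w$ whenever $u$ has no white neighbor in the white component of $w$. Under that rule the example above dissolves ($b$ forces $d$ at step~$1$), and the ``$\leq$'' direction of Proposition~\ref{ptProp} transcribes cleanly, since deleting an edge at a currently blue vertex leaves the white-component partition unchanged. The ``$\geq$'' direction still needs the per-time-step component check you anticipate, because hop edges added for late hops can merge white components at earlier times; but that analysis should be carried out for the correct $\floor{Z_+}$ rule, not the $\zf$ hop. In short: your diagnosis is right, your proposed cure (rerouting while retaining the $\zf$ hop) cannot work; the fix is to invoke the genuine component-sensitive $\floor{Z_+}$ rule, after which the transcription becomes as routine as you describe for $Z_\ell$.
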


It is likely that Corollary \ref{otherP} will hold for any graph parameter $p$ such that $\floor{p}$ has a corresponding color change rule that takes the color change rule for $p$ and allows hopping. However, no other parameters $p$ have been shown to have this property. Note that if $B$ is a standard zero forcing set of a graph $G$, then $B$ is also a $\zf$ forcing set of $G$ with $\ptzf(G; B) \leq \ptz(G; B)$. Thus, it is immediate that for any graph $G$, $\thzf(G)$ is bounded above by $\thz(G)$. Butler and Young showed in \cite[page 66]{BY13Throt} that for any graph $G$ of order $n$, $\thz(G)$ is at least $\ceil{2\sqrt{n} - 1}$. By Corollary \ref{zfThrotSpanSupers}, this lower bound holds for $\thzf(G)$ as well. 
\begin{cor}\label{thzfLowerBd}
If $G$ is a graph of order $n$, then 
\beas 
\thzf(G) = \min\{\thz(H) \ | \ G \leq H \text{ and }|G| = |H| \} \geq \ceil{2\sqrt{n} - 1}. 
\eeas
\end{cor}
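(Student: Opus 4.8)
The plan is to observe that this corollary is essentially immediate from two ingredients already in hand: the equality is literally the content of Corollary~\ref{zfThrotSpanSupers}, and the inequality follows by applying the Butler--Young lower bound uniformly to every term in the minimization. So the proof amounts to assembling these pieces.

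First I would invoke Corollary~\ref{zfThrotSpanSupers} to record the equality $\thzf(G) = \min\{\thz(H) \ | \ G \leq H \text{ and } |G| = |H|\}$ with no further argument. The entire novelty of this corollary lies in the inequality, not the equality, so this first step is purely a citation of the preceding result.

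For the inequality, the key observation is that every $H$ appearing in the minimum is a \emph{spanning} supergraph of $G$, so $|H| = |G| = n$. By the Butler--Young bound from \cite[page 66]{BY13Throt}, any graph on $n$ vertices has standard throttling number at least $\ceil{2\sqrt{n} - 1}$. Hence each individual term $\thz(H)$ in the minimization satisfies $\thz(H) \geq \ceil{2\sqrt{n} - 1}$, and a minimum of quantities each bounded below by $\ceil{2\sqrt{n}-1}$ is itself bounded below by $\ceil{2\sqrt{n}-1}$. Combining with the equality from the first step yields the claim.

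I do not expect any genuine obstacle here; the statement is a straightforward corollary rather than a theorem requiring new ideas. The single point worth flagging is that the order-preserving nature of spanning supergraphs (the condition $|G|=|H|$) is exactly what allows the order-$n$ bound to apply uniformly across all terms of the minimum; without it, a supergraph on more vertices would carry a different, larger lower bound and the argument would not transfer cleanly.
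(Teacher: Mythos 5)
Your proposal is correct and matches the paper's own (implicit) argument exactly: the equality is a citation of Corollary~\ref{zfThrotSpanSupers}, and the inequality follows by applying the Butler--Young bound $\thz(H) \geq \ceil{2\sqrt{n}-1}$ to each spanning supergraph $H$ (all of which have order $n$) and then taking the minimum. No issues.
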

Since the $\zf$ throttling number is bounded above by the standard throttling number, any graph $G$ that achieves $\thz(G) = \ceil{2\sqrt{n} - 1}$ also achieves $\thzf(G) = \ceil{2\sqrt{n} - 1}$. It was shown in \cite{BY13Throt} that $\thz(P_n) = \ceil{2 \sqrt{n} - 1}$. Thus, it can be concluded that $\thzf(P_n) = \ceil{2\sqrt{n} - 1}$. The standard throttling number of a cycle was determined in \cite{PSD} as follows.

\begin{thm}\emph{\cite[Theorem 7.1]{PSD}}\label{standThrotCycle}
Let $C_n$ be a cycle on $n$ vertices. Define $m$ to be the largest integer such that $m^2 \leq n$ and $n = m^2 + r$. Then
\beas 
\thz(C_n) = 
\begin{cases}
2m-1 & \text{if } r=0 \text{ and } m \text{ is even},\\
2m & \text{if } 0 < r \leq m \text{ or }(r=0 \text{ and } m \text{ is odd}), \\
2m + 1 & \text{if } m < r < 2m+1.
\end{cases}
\eeas
\end{thm}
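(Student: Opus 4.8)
The plan is to reduce the computation to the discrete optimization problem
$$\thz(C_n) = \min\{\, q + \ceil{n/q} - 1 \;:\; q \text{ even},\ q \ge 2 \,\},$$
and then to evaluate this minimum by cases on $r$ and the parity of $m$. The structural engine behind both bounds is that, because $C_n$ is $2$-regular, a blue vertex can force only when it already has a blue neighbor; hence forcing can be initiated only at the two ends of a maximal blue arc of length at least $2$, while an isolated blue vertex (an arc of length $1$) is inert until some growing arc reaches it. Thinking of each end of a length-$\ge 2$ arc as a ``front'' that advances one vertex per time step, a blue set $B$ with $a$ maximal arcs of length $\ge 2$ supplies exactly $2a$ fronts and satisfies $|B| \ge 2a$.

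For the lower bound I would first prove the key lemma that the number of maximal blue arcs of length $\ge 2$ never increases during the synchronous process: a new such arc is never created, since an isolated blue vertex cannot force until one of its neighbors is colored (and in the synchronous model this happens no sooner than the next step, after it has already been absorbed into a neighboring arc), while merges only decrease the count. Consequently at most $2a$ forces occur in any time step, so the $n - |B|$ forces performed over $t = \ptz(C_n; B)$ steps satisfy $n - |B| \le 2a\,t \le 2\floor{|B|/2}\,t$. For even $|B| = q$ this gives $q(t+1) \ge n$, hence $t \ge \ceil{n/q} - 1$ and $\thz(C_n; B) \ge q + \ceil{n/q} - 1$; I would then check that odd $|B|$ yields no smaller value (the bound for $|B| = 2a+1$ exceeds that for $|B| = 2a$ because consecutive ceilings differ by at most $1$). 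Minimizing over even $q$ gives the lower bound.

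For the matching upper bound I would exhibit, for the optimal even $q$, a blue set consisting of $a = q/2$ blocks of two adjacent vertices placed around the cycle with the $a$ resulting gaps of white vertices made as equal as possible. Since each gap is forced simultaneously from both ends, its clearing time is $\ceil{g_i/2}$, and the inequality $q\ceil{n/q} \ge n$ guarantees the $n - q$ white vertices can be distributed so that every gap has length at most $2(\ceil{n/q} - 1)$; thus $\ptz(C_n;B) \le \ceil{n/q} - 1$ and $\thz(C_n) \le q + \ceil{n/q} - 1$, with a few small-$n$ configurations checked directly.

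Finally I would evaluate $\min_{q \text{ even}} \big(q + \ceil{n/q} - 1\big)$ for $n = m^2 + r$ by testing the even values of $q$ nearest $\sqrt{n} \approx m$, namely $q = m$ when $m$ is even and $q = m \pm 1$ when $m$ is odd (ruling out $q = m \pm 2$ as strictly worse). A short computation of $\ceil{n/q}$ in each regime $r = 0$, $0 < r \le m$, and $m < r < 2m+1$ then produces the three cases $2m-1$, $2m$, and $2m+1$. I expect the main obstacle to be the lower bound's structural lemma: making rigorous that isolated blue vertices can never speed up propagation (they skip a forced vertex but create no new front) and that the number of length-$\ge 2$ arcs is genuinely non-increasing around the cyclic wrap-around, since this is exactly the subtlety that separates the cycle's throttling number from the path value $\ceil{2\sqrt{n} - 1}$.
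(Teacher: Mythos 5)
This statement is quoted from \cite[Theorem 7.1]{PSD} and the paper gives no proof of it, so there is nothing internal to compare your argument against; I can only assess it on its own merits, and on those it is sound and consistent with the approach of the cited source, namely reducing to the optimization $\thz(C_n)=\min\{q+\ceil{n/q}-1 : q \text{ even},\ q\ge 2\}$ and then evaluating by cases on $r$ and the parity of $m$. Your structural observations are the right ones and they do hold: in $C_n$ a vertex can force only if its other neighbor is already blue, so every force emanates from an endpoint of a maximal blue arc of length at least two; every newly colored vertex extends such an arc, so the number of these arcs never increases and is at most $\floor{|B|/2}$, giving at most $2\floor{|B|/2}$ newly colored vertices per time step and hence $t\ge\ceil{n/q}-1$ for even $|B|=q$; the odd case reduces to the even case via $\ceil{(n-1)/(q-1)}\ge\ceil{n/(q-1)}-1$; and the upper bound with $q/2$ dominoes and near-equal gaps works because the largest gap is $\ceil{2(n-q)/q}\le 2\ceil{n/q}-2$, so each gap clears from both ends in at most $\ceil{n/q}-1$ steps. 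Two points to make explicit in a full write-up: first, the final minimization must exclude \emph{all} other even $q$, which is cleanest via $q+\ceil{n/q}-1\ge\ceil{2\sqrt{n}-1}$ together with the observation that for $n=m^2$ with $m$ odd, equality $q+m^2/q=2m$ forces $q=m$, which is odd, so every even $q$ yields at least $2m$; second, $q=m\pm 2$ is not always strictly worse than the nominal optimum (for $m$ even and $m<r\le 2m$ it ties at $2m+1$), which is harmless for the minimum but means the phrase ``ruling out as strictly worse'' should be softened.
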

Theorem \ref{standThrotCycle} can be used to determine the $\zf$ throttling number of a cycle.
\begin{prop}\label{zfthrotCycle}
Let $C_n$ be a cycle on $n$ vertices. Then $\thzf(C_n) = \ceil{2 \sqrt{n} - 1}$.

\end{prop}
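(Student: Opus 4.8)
The plan is to get the lower bound for free and spend all the effort on the matching upper bound. The inequality $\thzf(C_n) \ge \ceil{2\sqrt{n}-1}$ is immediate from Corollary \ref{thzfLowerBd}, which records exactly this bound for every graph of order $n$. For the upper bound I would first use the general fact $\thzf(C_n) \le \thz(C_n)$ together with the explicit value of $\thz(C_n)$ from Theorem \ref{standThrotCycle}. Writing $n = m^2 + r$ with $0 \le r \le 2m$, an elementary estimate of $\sqrt{n}$ (using $m < \sqrt{m^2+r} < m+\tfrac12$ for $0 < r \le m$ and $m+\tfrac12 < \sqrt{m^2+r} < m+1$ for $m < r \le 2m$) shows that $\ceil{2\sqrt{n}-1}$ equals $2m-1$ when $r=0$, equals $2m$ when $0 < r \le m$, and equals $2m+1$ when $m < r \le 2m$. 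Comparing with Theorem \ref{standThrotCycle}, these values coincide with $\thz(C_n)$ in every case \emph{except} $r=0$ with $m$ odd. Hence in all non-exceptional cases $\thzf(C_n) \le \thz(C_n) = \ceil{2\sqrt{n}-1}$, matching the lower bound.

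The one remaining case is $n = m^2$ with $m$ odd, where Theorem \ref{standThrotCycle} gives $\thz(C_n) = 2m$, one more than the target $2m-1$; here $\thzf \le \thz$ is too weak and the extra power of hopping must be used. By Corollary \ref{zfThrotSpanSupers} it suffices to produce a spanning supergraph $H$ of $C_{m^2}$ with $\thz(H) \le 2m-1$, that is, a set $B$ of $m$ vertices and a standard forcing of $H$ from $B$ coloring all $m^2$ vertices in $m-1$ steps.

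Since $m + (m-1) = 2m-1$ already meets the lower bound, any such forcing must be extremal: each of the $m-1$ steps must perform exactly $m$ forces, so the forces decompose $H$ into $m$ forcing chains of length $m$, all advancing at every step (a perfect width-$m$ sweep). I would build this by taking $B$ to be $m$ consecutive cycle vertices. The two ends of this block launch ordinary forcing fronts along the cycle, while its $m-2$ interior vertices are active with all neighbours blue and may hop; the hops are choreographed to seed the remaining $(m-1)(m-2)$ vertices of the complementary arc, after which cascading standard forces and further hops finish each chain. Recording each hop as a chord turns this into a spanning supergraph $H$, and the point to verify is that every edge \emph{not} used to perform a force joins two vertices lying in a common time column, so that such an edge is already blue whenever an incident vertex forces and never produces a second white neighbour. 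The case $m=3$ is the model: in $C_9$ the set $\{0,1,2\}$ with hops $1\to 4$ and $3\to 6$ clears all nine vertices in two steps, three forces per step.

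The main obstacle is precisely this explicit choreography. For $m$ even the analogous sweep is realized by a straightforward snake-like pairing of the fronts, which is exactly why $\thz(C_{m^2}) = 2m-1$ needs no hopping; for $m$ odd the parity of the block forces the snake to close between incompatible ends of the cycle, leaving exactly one vertex that no pair of standard fronts can reach in time, and this single-vertex deficit is what makes $\thz(C_{m^2}) = 2m$. The delicate step is to show that the hopping moves can be spent to cover precisely this leftover vertex while keeping all $m$ chains advancing at every step and all auxiliary edges inside a single time column; checking that the timing never stalls, so that a hop target is always reachable at the step it is needed, is where the real work lies. I would organize this either by induction on $m$ or by writing down an explicit formula for the $m$ chains that specializes to the $C_9$ pattern above.
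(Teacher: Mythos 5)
Your overall route is the paper's: the lower bound comes from Corollary \ref{thzfLowerBd}, your comparison of $\ceil{2\sqrt{n}-1}$ with Theorem \ref{standThrotCycle} correctly isolates $n=m^2$ with $m$ odd as the only case where $\thzf\le\thz$ is insufficient, and both your extremality observation (each of the $m-1$ time steps must perform exactly $m$ forces, so all $m$ chains advance at every step) and your $C_9$ example are correct. The genuine gap is that for odd $m\ge 5$ you never actually exhibit the forcing set and its set of forces: you describe the required shape of the construction and then defer it (``induction on $m$ or an explicit formula''), but that construction \emph{is} the remaining content of the proposition. As written, the exceptional case is verified only for $m=3$.

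The paper closes this case with a single drawing (Figure \ref{CycleSnake}): realize $C_{m^2}$ as a Hamiltonian cycle on an $m\times m$ array in which every cycle edge is either vertical (within a column) or horizontal between consecutive columns --- equivalently, as a spanning subgraph of $K_m \square P_m$ --- and take $B$ to be the left column. Then, when columns $1,\dots,j$ are blue, each vertex of column $j$ has at most one white neighbour, namely its right-hand array neighbour if that horizontal edge is a cycle edge; so it either standard-forces that vertex or, having no white neighbours, hops to it, and the coloring advances one full column per time step, finishing at time $m-1$. Seen this way, your ``choreography'' problem reduces to producing one such Hamiltonian drawing for each odd $m$, and your timing worries largely dissolve: every vertex of $C_n$ has degree $2$, so there are no auxiliary edges to confine to a common time column, and a hop target is always available at the step it is needed because the hopping vertex's chain successor sits in the next column by construction. (Your $C_9$ example is exactly an instance of this with $B$ the left column of a $3\times 3$ array.) Replacing your open-ended plan with this array construction would complete the argument.
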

\begin{proof}
Define $m$ to be the largest integer such that $m^2 \leq n$ and $n = m^2 + r$. Note that if $m$ is even or $r > 0$, then the conditions in Theorem \ref{standThrotCycle} are equivalent to the conditions for $\thz(P_n)$ in \cite{BY13Throt}. So in this case, $\thzf(C_n) = \thz(P_n) = \ceil{2 \sqrt{n} - 1}$. Now suppose $m$ is odd and $r = 0$. So $n = m^2$ and $\thz(C_n) = 2m = \ceil{2 \sqrt{n} - 1} + 1$. In this case, construct a $\zf$ forcing set $B$ with $|B| = m$ and $\ptzf(C_n; B) \leq m-1$ as follows. Draw $C_n$ by arranging the vertices in an $m$ by $m$ array and adding the edges as in Figure \ref{CycleSnake}. Let $B$ be the set of vertices in the left column. Note that in each time step, every active vertex can force the vertex to its right to become blue (sometimes by a hop), so every vertex becomes blue one column at a time. Let $\calf$ be the set of $\zf$ forces of $B$ obtained by this process. Clearly $|B| = m$ and $\ptzf(C_n; B) \leq \ptzf(C_n; \calf) = m-1$. Thus $\thzf(C_n) \leq 2m - 1 = \ceil{2\sqrt{n} - 1}$.
\end{proof}

\begin{figure}[H] \begin{center}
\scalebox{1}{\includegraphics{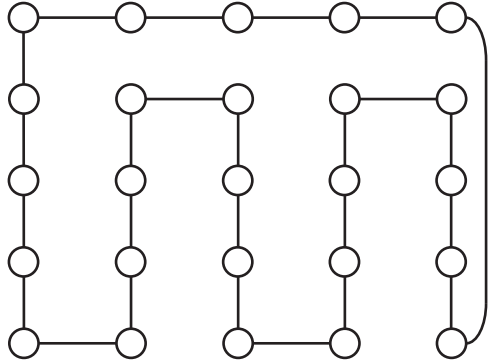}}\\
\caption{The cycle $C_n$ with $n = m^2$ and $m=5$.}\label{CycleSnake}
\end{center}
\end{figure}
Example \ref{StarWheelEx} uses Theorem \ref{zfThrotSubMon} to demonstrate that if $\thz(G) > \ceil{2 \sqrt{n} - 1}$, then $\thzf(G)$ can differ greatly from $\thz(G)$.  

\begin{figure}[H] \begin{center}
\scalebox{1}{\includegraphics{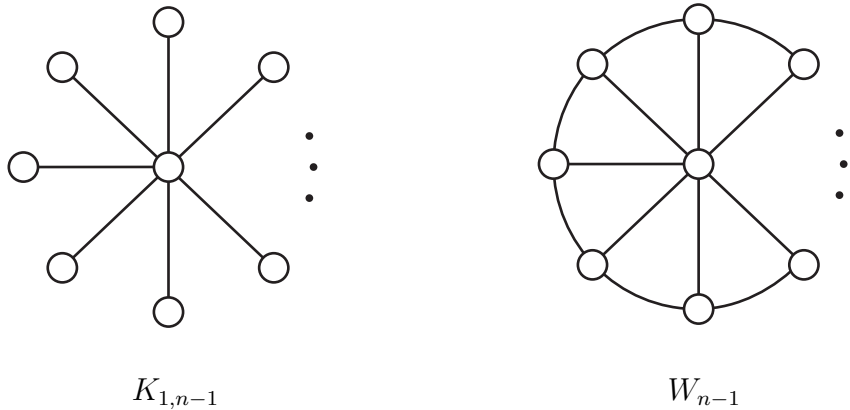}}\\
\end{center}
\hspace{1.63 in} $K_{1,n-1}$ \hspace{2.24 in} $W_{n-1}$
\begin{center}
\caption{The star on $n$ vertices alongside the wheel as a spanning supergraph.}\label{starwheel}
\end{center}
\end{figure}
\begin{ex}\label{StarWheelEx}
Let $G$ be the star $K_{1, n-1}$ on $n$ vertices as shown on the left in Figure \ref{starwheel}. Since $\Z(G) = n-2$, it can be verified by inspection that $\thz(G) = n$. Consider the wheel $W_{n-1}$ on $n$ vertices as a spanning supergraph of $G$ (shown on the right of Figure \ref{starwheel}). Obtain $B \subseteq V(W_{n-1})$ by choosing the center vertex of the wheel and a set of vertices on the outside cycle that achieves optimal $\zf$ throttling for a cycle of order $n-1$. By Theorem \ref{zfThrotSubMon}, $\thzf(G) \leq \thzf(W_{n-1}) \leq \thzf(C_{n-1}) + 1 \leq \ceil{2\sqrt{n-1} - 1} + 1$. Recall that $\thzf(G) \geq \ceil{2 \sqrt{n} - 1}$. Note that there are infinitely many integers $n$ such that $\ceil{2\sqrt{n-1} - 1} + 1 = \ceil{2\sqrt{n} - 1}$. So in these cases, $\thzf(G) = \ceil{2\sqrt{n} - 1}$.
\end{ex}

The \emph{largeur d'arborescence} of a graph was defined by Colin de Verdi\`ere in \cite{la} to measure the width of trees. Note that largeur d`arborescence is french for tree width. The \emph{largeur de chemin} of $G$, denoted by $\operatorname{lc}(G)$, was introduced in \cite{Parameters} as the analog of largeur d'arborescence that measures the width of paths. Formally, $\operatorname{lc}(G)$ is defined as the minimum $k$ for which $G$ is a minor of the Cartesian product $K_k \square P$ of a complete graph on $k$ vertices with a path. The \emph{proper path width} of a graph $G$, $\operatorname{ppw}(G)$, is the smallest $k$ such that $G$ is a partial linear $k$-tree (see \cite{Parameters}). These parameters are connected to $\zf$ by the following theorem.

\begin{thm}\emph{\cite[Theorems 2.18 and 2.39]{Parameters}}\label{LCZFCCR} For every graph $G$ having at least one edge, $\operatorname{lc}(G) = \operatorname{ppw}(G)=\zf(G)$.
\end{thm}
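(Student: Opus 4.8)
The plan is to prove both equalities by routing through the intermediate parameter $\operatorname{lc}$: first establish $\zf(G)=\operatorname{lc}(G)$, where the real work lies, and then $\operatorname{lc}(G)=\operatorname{ppw}(G)$ as a purely structural (zero-forcing-free) translation. The easy half is $\zf(G)\le\operatorname{lc}(G)$. First I would record that $\Z(K_k\square P_m)=k$ for $m\ge 2$: coloring one copy of $K_k$ blue forces the graph layer by layer (each blue vertex has a unique white neighbor in the next layer), so $\Z\le k$; and $\Z(G)\ge\delta(G)$ always, since at the first force $u\to w$ every vertex of $\{u\}\cup(N(u)\setminus\{w\})$ is already blue and hence lies in the forcing set, a set of size $\deg(u)\ge\delta$, while $\delta(K_k\square P_m)=k$. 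Now if $\operatorname{lc}(G)=k$ then $G\preceq K_k\square P$ for some path $P$, so also $G\preceq K_k\square P_2$, and the minor-monotone-floor definition gives $\zf(G)\le\Z(K_k\square P_2)=k$.

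The heart is the reverse inequality $\operatorname{lc}(G)\le\zf(G)$, which I would obtain from the stronger claim $\operatorname{lc}(H)\le\Z(H)$ for \emph{every} graph $H$; applying this to an $H\succeq G$ realizing $\Z(H)=\zf(G)$ and using transitivity of $\preceq$ then finishes. Fix a minimum forcing set $B$ of $H$ with a set of forces $\calf$, yielding chains $C_1,\dots,C_k$ that partition $V(H)$, and let $\tau(v)$ be the time $v$ is colored and $T$ the propagation time. I would build a minor model of $H$ in $K_k\square P_{T+1}$ by placing chain $C_i$ in row $i$ and assigning to each $v=u_{i,j}$ the horizontal branch set of cells $(i,t)$ with $\tau(u_{i,j})\le t<\tau(u_{i,j+1})$ (running to column $T$ for the last vertex of the chain). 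Since chains start at vertices of $B$ at time $0$ and the $\tau$-values strictly increase along a chain, these branch sets are connected and tile the grid with no gaps. Consecutive chain vertices occupy adjacent column-intervals in the same row, realizing the within-chain edges; and no two non-consecutive vertices of the same chain are adjacent in $H$, because when $u_{i,j}$ forces, its only white neighbor is $u_{i,j+1}$, so any later chain-mate would be a second white neighbor (and any earlier one is ruled out symmetrically).

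The crux is the lemma that if $u=u_{i,j}$ and $w=u_{i',j'}$ lie in different chains with $uw\in E(H)$, then their lifespan intervals overlap, so the $K_k$ in the common column supplies the required minor edge. This is exactly where the color change rule enters: at the moment $u$ forces its successor, $w$ (a neighbor other than that successor) is already blue, giving $\tau(w)\le\tau(u_{i,j+1})-1$, the right endpoint of $u$'s interval; the symmetric argument with $w$ forcing its successor bounds $\tau(u)$ by the right endpoint of $w$'s interval, and two intervals each of whose left endpoint is at most the other's right endpoint must intersect. Chain-ends never force, so they need separate treatment, but their intervals reach column $T$ and overlap is then automatic. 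I expect the main obstacle to be assembling these cases into a clean verification that the assignment really is a valid minor model, together with the bookkeeping that the branch sets partition the whole grid.

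Finally, for $\operatorname{lc}(G)=\operatorname{ppw}(G)$ I would give a direct dictionary between the two descriptions. From a minor model of $G$ in $K_k\square P$, reading off for each column the branch sets meeting that column produces a width-$k$ proper path decomposition, the clique in each column forcing the bags to be proper; conversely a partial linear $k$-tree / proper path decomposition of width $k$ embeds as a minor into $K_k\square P$ by placing each bag in a column and routing each vertex along the contiguous run of columns whose bags contain it. The delicate point here is purely bookkeeping: reconciling the width convention $\max_i|X_i|-1$ for $\operatorname{ppw}$ with the index $k$ of $K_k$, and checking that the properness/linearity condition is precisely what makes the two widths coincide rather than differ by one.
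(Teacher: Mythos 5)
First, a point of reference: the paper does not prove this theorem --- it is quoted from \cite{Parameters} --- so there is no in-paper proof to compare against line by line. That said, your argument for $\zf(G)=\operatorname{lc}(G)$ is essentially correct and, notably, reinvents the paper's own machinery: the minor model of $H$ in $K_k\square P_{T+1}$ obtained by giving each chain vertex the horizontal interval of columns during which it is active is precisely the extension $\mathcal{E}(H,B,\calf)$ of Definition \ref{ext}, together with the containment $H\preceq\mathcal{E}(H,B,\calf)\le K_{|B|}\square P_{T+1}$ that drives Theorems \ref{thzfChar} and \ref{throtChar}. Your interval-overlap lemma for cross-chain edges, the induced-path property of maximal chains, and the reduction of $\operatorname{lc}(G)\le\zf(G)$ to the statement $\operatorname{lc}(H)\le\Z(H)$ for a major $H$ realizing the floor are all sound. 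One slip in the easy direction: from $G\preceq K_k\square P$ you cannot conclude $G\preceq K_k\square P_2$; what you actually need is that $\Z(K_k\square P_m)=k$ for every $m\ge 2$ (and $K_k\square P_1\le K_k\square P_2$), so $\zf(G)\le k$ whatever the path length. That is cosmetic.

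The genuine gap is in the direction ``a minor model in $K_k\square P$ yields a proper path decomposition.'' Taking as the bag of column $t$ the set of branch sets meeting column $t$ does not produce a path decomposition at all: an edge of $G$ realized by a \emph{path} edge of $K_k\square P$ joins a branch set whose rightmost column is $t$ to one whose leftmost column is $t+1$, and no single column need meet both. Already $P_2\preceq K_1\square P_2$ with singleton branch sets gives bags $\{u\}$ and $\{v\}$ with the edge $uv$ in no bag. This is not a width-convention issue; the per-column reading is simply the wrong object. The standard repair is to decompose the \emph{host} graph rather than the minor model: order $V(K_k\square P_m)$ column by column and observe that every edge (complete or path) joins vertices at distance at most $k$ in this order, so $K_k\square P_m$ is a spanning subgraph of the $k$-th power of a path, i.e.\ of the canonical linear $k$-tree; hence $\operatorname{ppw}(K_k\square P_m)\le k$ by the paper's partial-linear-$k$-tree definition, and minor monotonicity of $\operatorname{ppw}$ (itself a known but not free fact) gives $\operatorname{ppw}(G)\le\operatorname{lc}(G)$. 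Your converse direction, embedding a linear $k$-tree into $K_k\square P$ as a minor by routing each vertex along the run of columns containing it, is fine. So the $\zf=\operatorname{lc}$ half stands as written, but the $\operatorname{lc}=\operatorname{ppw}$ half needs this replacement.
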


It is known that proper path-width is equivalent to the mixed search number of a graph (see \cite{ppwSearch}). Since $\operatorname{ppw}(G) = \zf(G) \leq \thzf(G)$ for any graph $G$, Theorem \ref{LCZFCCR} connects $\zf$ throttling to mixed searching. Theorem \ref{LCZFCCR} also exhibits a relationship between $\zf$ and graphs of the form $K_k \square P$. It is useful to capitalize on this relationship in order to characterize $\thzf(G)$. For a given a graph $G$, the idea is to extend $G$ by using a set of forces in $G$. The next definition constructs a graph from a given graph $G$, a standard zero forcing set $B \subseteq V(G)$, and a set of standard forces $\calf$. This construction is illustrated in Figure \ref{extExample}.
\begin{defn}\label{ext}
Let $G$ be a graph and let $B \subseteq V(G)$ be a standard zero forcing set of $G$. Suppose $\calf$ is a set of $\Z$ forces of $B$ with $\ptz(G; B) = \ptz(G; \calf)$. Let $P_1, P_2, \ldots , P_{|B|}$ be the induced paths in $G$ formed by the maximal forcing chains of $\calf$. For each vertex $v \in V(G)$, consider the path $P_i$ that contains $v$ and let $\tau(v)$ be the number of times in the propagation process of $\calf$ at which $v$ is active (possibly including time $0$). Define the \emph{(zero forcing) extension of $G$ with respect to $B$ and $\calf$}, denoted $\mathcal{E}(G, B, \calf)$, to be the graph obtained by the following procedure. 
\begin{enumerate}
\item From each path $P_i$ in $G$, construct a new path $P_i'$ so that for each $v \in P_i$, there are $\tau(v)$ copies of $v$ in $P_i'$, and for each pair $v_a$, $v_b \in P_i$ such that $v_a$ is forced before $v_b$ in $P_i$, every copy of $v_a$ is to the left of every copy of $v_b$ in $P_i'$.  Note that for each $1 \leq i \leq |B|$, $|V(P_i')| = \ptz(G; B) + 1$ and the paths $\{P_1', P_2', \ldots, P_{|B|}'\}$ can be arranged into a $|B|$ by $\pt(G; B) + 1$ array of vertices.
\item For each edge $uv \in E(G) \setminus \bigcup_{i=1}^{|B|}E(P_i)$, suppose $P_q$ and $P_r$ are the paths that contain $u$ and $v$ respectively. Since $u$ and $v$ must both be active before $u$ or $v$ can perform a force in $G$, there is at least one column in the $|B|$ by $\pt(G; B) + 1$ array such that a copy of $u$ and a copy of $v$ appear in that column. Draw an edge connecting the copy of $u$ in $P_q'$ and the copy of $v$ in $P_r'$ that are in the least such column.
\end{enumerate}
\end{defn}

\begin{ex}
Let $G$ be the graph shown on the left in Figure \ref{extExample}. Choose $B = \{v_1, v_4, v_7\}$ and let $\calf$ be the set of standard forces $\calf = \{v_1 \rightarrow v_2, v_2 \rightarrow v_3, v_4 \rightarrow v_5, v_5 \rightarrow v_6, v_7 \rightarrow v_8, v_8 \rightarrow v_9\}.$ Note that the forces in $\calf$ correspond to the horizontal edges in $G$ as shown in Figure \ref{extExample}. The {\color{red} numbers} above the vertices of $G$ indicate the time step in $\calf$ when that vertex is forced (making that vertex active at the next time in the propagation process). For example, $v_7 \rightarrow v_8$ in time step $1$ and $v_8 \rightarrow v_9$ in time step $3$. Since there are two times in $\calf$ at which $v_8$ active, there are two copies of $v_8$ in $\mathcal{E}(G; B; \calf)$, which is shown on the right in Figure \ref{extExample}.
\end{ex}
\begin{figure}[H] \begin{center}
\scalebox{1.1}{\includegraphics{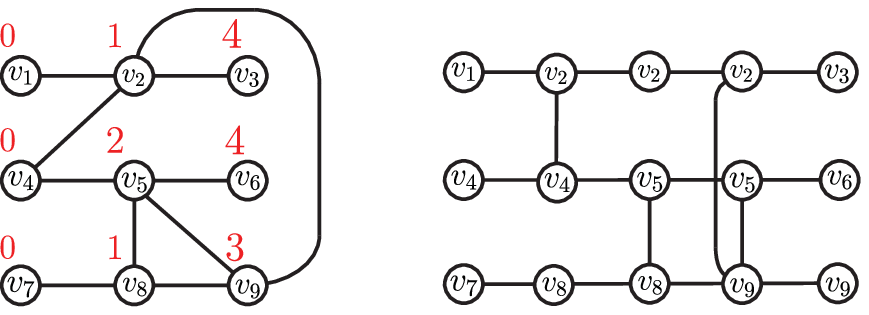}}\\
\end{center}
\hspace{1.69 in} $G$ \hspace{1.71 in} $\mathcal{E}(G; B; \calf)$
\begin{center}
\caption{$G$, $B$, and $\calf$ are illustrated alongside the extension $\mathcal{E}(G;B;\calf)$.}\label{extExample}
\end{center}
\end{figure}

Consider the graph $G = K_a \square P_b$. Define the $\emph{path edges}$ of $G$ to be the edges in each copy of $P_b$ in the Cartesian product. Likewise, define the $\emph{complete edges}$ of $G$ to be the edges in each copy of $K_a$ in the Cartesian product. For example, if $G$ is drawn so that $V(G)$ is arranged as an $a$ by $b$ array where each column induces a $K_a$ and each row induces a $P_b$, then the path edges of $G$ are the horizontal edges and the complete edges of $G$ are the vertical edges. Given a graph $G$, an edge $e \in E(G)$, a standard zero forcing set $B \subseteq V(G)$, and a set $\calf$ of standard forces in $G$ that uses $e$ to perform a force, the following definition constructs a standard zero forcing set in $G/e$ and a set of standard forces in $G/e$ that mimic $B$ and $\calf$ respectively.
\begin{defn}
Let $G$ be a graph with standard zero forcing set $B \subseteq V(G)$ and suppose $\calf$ is a set of forces of $B$. Let $e \in E(G)$ be an edge that is used to perform a force in $\calf$. Define $v_1 \rightarrow v_2 \rightarrow \cdots \rightarrow v_k$ to be the maximal forcing chain of $\calf$ that contains $e$. Note that $k \geq 2$. For each $1 \leq j \leq k-1$, let $e_j$ be the edge $v_jv_{j+1}$ and let $\vec{e}_j$ denote the force $v_j \rightarrow v_{j+1}$. So $e = e_i$ for some $1 \leq i \leq k-1$.  Define $v_e$ to be the vertex in $G/e$ obtained by contracting $e$ in $G$ and define the sets $B/e$ and $\calf/e$ as follows.
\beas 
B/e = 
\begin{cases}
(B \setminus \{v_i\}) \cup \{v_e\} & \text{if } i = 1,\\
B & \text{if } i > 1,\\

\end{cases}
\eeas
and
\beas 
\calf/e = 
\begin{cases}
(\calf \setminus \{\vec{e}_{i-1}, \vec{e}_i, \vec{e}_{i+1}\}) \cup \{v_{i-1} \rightarrow v_e, v_e \rightarrow v_{i+2}\} & \text{if } k > 2 \text{ and } 1 < i < k-1,\\
(\calf \setminus \{\vec{e}_i, \vec{e}_{i+1}\}) \cup \{v_e \rightarrow v_{i+2}\} & \text{if } k > 2 \text{ and } i = 1,\\
(\calf \setminus \{\vec{e}_{i-1}, \vec{e}_i\}) \cup \{v_{i-1} \rightarrow v_e\} & \text{if } k > 2 \text{ and } i = k-1,\\
\calf \setminus \{\vec{e}_i\} & \text{if } k = 2.
\end{cases}
\eeas
\end{defn}

Lemma \ref{contractZF} is used to prove Theorem \ref{thzfChar} which exhibits a relationship between $\thzf$ and graphs of the form $K_a \square P_{b+1}$.

\begin{lem}\label{contractZF}
Let $G$ be a graph. Suppose $B \subseteq V(G)$ is a standard zero forcing set of $G$ with a set of standard forces $\calf$. If $e=uv$ is an edge in $E(G)$ and $(u \rightarrow v) \in \calf$, then $\calf/e$ is a set of standard forces of $B/e$ in $G/e$ such that $\ptz(G/e, \calf/e) \leq \ptz(G; \calf)$. Furthermore, if $\calf$ and $B$ satisfy $\ptz(G; \calf) = \ptz(G; B)$ and $\thz(G) = \thz(G; B)$, then $\thz(G/e) \leq \thz(G)$.
\end{lem}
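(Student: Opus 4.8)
The plan is to prove the two assertions in turn, with essentially all of the work in the first. Throughout, write $v_1 \to v_2 \to \cdots \to v_k$ for the maximal forcing chain of $\calf$ containing $e$, so that $u = v_i$ and $v = v_{i+1}$, and let $\phi : V(G) \to V(G/e)$ be the surjection sending both $v_i$ and $v_{i+1}$ to the contracted vertex $v_e$ and fixing every other vertex. For $x \in V(G)$ let $t(x)$ denote the time at which $x$ is colored blue in the $\calf$-process (so $t(x)=0$ for $x \in B$). The first things I would record are the two elementary facts about contraction that drive the argument: $N_{G/e}(v_e) = \phi\bigl((N_G(v_i)\cup N_G(v_{i+1}))\setminus\{v_i,v_{i+1}\}\bigr)$ while $N_{G/e}(x) = \phi(N_G(x))$ for every other vertex $x$; and that $\phi(B) = B/e$ with $|B/e| = |B|$ in each defining case (when $i=1$ the vertex $v_1\in B$ is replaced by $v_e$, and when $i>1$ neither $v_i$ nor $v_{i+1}$ lies in $B$).

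For the first assertion I would run a single induction on time. Writing $A_t = \bigcup_{s=0}^t \calf\up{s}$ and $A_t' = \bigcup_{s=0}^t (\calf/e)\up{s}$ for the vertices colored blue by time $t$ under $\calf$ in $G$ and under $\calf/e$ in $G/e$, the invariant to prove is
\[
\phi(A_t) \subseteq A_t' \qquad \text{for all } t \ge 0 .
\]
The base case is the identity $\phi(A_0)=\phi(B)=B/e=A_0'$ noted above. For the inductive step I would take a vertex $w$ colored at time $t+1$ in $G$, say by $x\to w\in\calf$, so that $w$ is the unique neighbor of $x$ outside $A_t$, and show $\phi(w)\in A_{t+1}'$. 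The forces of $\calf/e$ split into those inherited unchanged from $\calf$ and the (at most two) new forces $v_{i-1}\to v_e$ and $v_e\to v_{i+2}$, and I would case on this split. For an unchanged force $x\to w$ one checks that $x\notin\{v_{i-1},v_i,v_{i+1}\}$ and $w\notin\{v_i,v_{i+1},v_{i+2}\}$, since each of those vertices performs or receives only a deleted force; hence $\phi$ fixes $x$ and $w$, the inductive hypothesis sends every $G$-neighbor of $x$ lying in $A_t$ into $A_t'$, and $w$ is left as the only possible white neighbor of $x$, so the force fires. The remaining cases treat $w\in\{v_i,v_{i+1},v_{i+2}\}$ directly: $w=v_{i+1}$ needs nothing, since $v_i\in A_t$ already gives $v_e=\phi(v_i)\in A_t'$ by the hypothesis; $w=v_i$ (which only occurs for $i>1$) is colored via $v_{i-1}\to v_e$, using that $v_i$ was the unique white neighbor of $v_{i-1}$ in $G$; and $w=v_{i+2}$ is colored via $v_e\to v_{i+2}$.

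The main obstacle is exactly this last case: verifying that $v_{i+2}$ is the unique white neighbor of the merged vertex $v_e$ at the moment it must fire. Here I would combine the two original validity statements: validity of $v_i\to v_{i+1}$ in $G$ puts every neighbor of $v_i$ other than $v_{i+1}$ into $A_{t(v_{i+1})-1}\subseteq A_t$, while validity of $v_{i+1}\to v_{i+2}$ puts every neighbor of $v_{i+1}$ other than $v_{i+2}$ into $A_t$; by the inductive hypothesis all of these images, together with $v_e=\phi(v_i)$, are blue in $G/e$ by time $t$, so $v_{i+2}$ is the only candidate white neighbor of $v_e$ and the combined force is valid. The boundary cases $i=1$, $i=k-1$, and $k=2$ only delete forces or place $v_e$ in $B/e$, and are strictly easier. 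Taking $t = \ptz(G;\calf)=:T$ yields $\phi(A_T)=\phi(V(G))=V(G/e)\subseteq A_T'$, so $\calf/e$ colors all of $G/e$ and $\ptz(G/e;\calf/e)\le T = \ptz(G;\calf)$; since the chains of $\calf/e$ are those of $\calf$ with $v_i,v_{i+1}$ merged, $\calf/e$ is a genuine set of standard forces of $B/e$.

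Finally, for the throttling inequality I would chain the established facts. Assuming $\ptz(G;\calf)=\ptz(G;B)$ and $\thz(G)=\thz(G;B)$, Definitions \ref{RPropSet} and \ref{RThrotNum} together with $|B/e|=|B|$ give
\[
\thz(G/e) \le \thz(G/e;B/e) = |B/e| + \ptz(G/e;B/e) \le |B| + \ptz(G/e;\calf/e),
\]
where the last step uses that $\ptz(G/e;B/e)$ minimizes over all sets of forces of $B/e$ and $\calf/e$ is one of them. The first assertion gives $\ptz(G/e;\calf/e)\le\ptz(G;\calf)=\ptz(G;B)$, so $\thz(G/e)\le |B|+\ptz(G;B)=\thz(G;B)=\thz(G)$, as required.
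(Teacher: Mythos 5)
Your proof is correct, but it takes a genuinely different route from the paper's. The paper proves the propagation-time inequality by induction on $\ptz(G;\calf)$: it isolates the set $T(\calf)$ of vertices forced in the final time step, observes that every neighbor of such a vertex other than its forcer also lies in $T(\calf)$, deletes $T(\calf)$, applies the induction hypothesis to $G-T(\calf)$ with the truncated force set, and then argues that reattaching $T(\calf)$ costs at most one additional time step (with a separate, easier case when the contracted edge's force occurs in the final step). You instead run a forward induction on the time index, maintaining the simulation invariant $\phi(A_t)\subseteq A_t'$ and resolving each newly forced vertex according to whether its force survives the contraction or is one of the at most three replaced forces. The trade-off is this: the paper's ``peel off the last layer'' induction is shorter to state, but its key step --- that adding $T(\calf)$ back to $H/e$ increases the propagation time by at most one --- is asserted rather than verified, and verifying it requires exactly the computation you make explicit, namely that validity of $v_i\rightarrow v_{i+1}$ at its own earlier time step combined with validity of $v_{i+1}\rightarrow v_{i+2}$ leaves $v_{i+2}$ as the only possibly white neighbor of $v_e$ when the merged force must fire. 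Your invariant makes that check local and unavoidable, delivers in one pass both that $\calf/e$ colors all of $V(G/e)$ and that $\ptz(G/e;\calf/e)\leq\ptz(G;\calf)$, and needs no case split on when the contracted force occurs; the price is a somewhat longer case analysis on the identity of the forced vertex $w$. The concluding throttling inequality is handled identically in both arguments.
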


\begin{proof}
Let $G$ be a graph with standard zero forcing set $B \subseteq V(G)$. Let $\calf$ be a set of forces of $B$ and suppose $e = uv \in E(G)$ is an edge that is used to perform a force in $\calf$. Assume without loss of generality that $(u \rightarrow v) \in \calf$. Proceed by induction on $\ptz(G; \calf)$. If $\ptz(G; \calf) = 0$, then $B = V(G)$ and no such edge $e$ exists and there is nothing to prove. Suppose $\ptz(G; \calf) = 1$. In this case, it is clear that $\calf/e$ is a set of forces of $B/e$ in $G/e$ and $\ptz(G/e; \calf/e) \leq 1 = \ptz(G; \calf)$. 

Now suppose that for some $k \geq 1$, the result is true for any graph $H$ and set of forces $\mathcal{Q}$ with $\ptz(H; \mathcal{Q}) \leq k$. Again, let $G$ be a graph with standard zero forcing set $B \subseteq V(G)$. Now, suppose $\calf$ is a set of standard forces of $B$ with $\ptz(G; \calf) = k+1$. Let $e = uv$ be a given edge in $G$ such that $(u \rightarrow v) \in \calf$. Define $T(\calf)$ to be all vertices in $G$ that are forced last in $\calf$ (at time step $k+1$). For all vertices $q \in T(\calf)$, let $q'$ be the vertex in $G$ that forces $q$ at time step $k+1$. Note that for any $q \in T(\calf)$ and any neighbor $y$ of $q$ in $G$ with $y \neq q'$, $y$ is also in $T(\calf)$. This is because if $y \notin T(\calf)$, then $y$ cannot perform a force until $q$ is forced. However, $q$ is forced in time step $k+1$ which implies that $y$ forces in a time step greater than $\ptz(G; \calf)$, and this is a contradiction. Suppose $uv = q'q$ for some $q \in T(\calf)$. Since $N(v) \setminus \{u\} \subseteq T(\calf)$, $\calf/e$ is a set of forces of $B/e$ in $G/e$ such that $\ptz(G/e; \calf/e) \leq k+1 = \ptz(G; \calf)$.

Finally, suppose $u \rightarrow v$ in $\calf$ at a time step less than $k+1$. Construct $G/e$ by the following process. First, remove $T(\calf)$ from $G$ to obtain $H = G - T(\calf)$. Next, contract $e$ in $H$ to obtain $H/e$. Finally, add $T(\calf)$ to $H$ so that the neighborhood in $H$ of each $q \in T(\calf)$ is the same as the neighborhood of $q$ in $G$ (except that there may be a $q \in T(\calf)$ such that $v_e \sim q$ in $G/e$ whereas $v \sim q$ in $G$). Let $\calf' = \calf \setminus \{q' \rightarrow q \ | \ q \in T(\calf)\}$. Clearly $\ptz(H; \calf') \leq k$. So by the induction hypothesis, $\ptz(H/e; \calf'/e) \leq \ptz(H; \calf') \leq k$. When $T(\calf)$ is added to $H/e$ and the set of forces $\calf/e$ is considered instead of $\calf'/e$, the propagation time will increase by at most $1$. Thus, $\ptz(G/e; \calf/e) \leq \ptz(H/e; \calf'/e) + 1 \leq k+1 = \ptz(G; \calf).$ Note that if $\calf$ and $B$ are chosen such that $\ptz(G; \calf) = \ptz(G; B)$ and $\thz(G) = \thz(G; B)$, then 
\beas 
\thz(G/e) \leq |B/e| + \ptz(G/e; \calf/e) \leq |B| + \ptz(G; \calf) = |B| + \ptz(G; B) = \thz(G). \qedhere
\eeas
\end{proof}

\begin{thm}\label{thzfChar}
Given a graph $G$ and a positive integer $t$, $\thzf(G) \leq t$ if and only if there exists integers $a \geq 1$ and $b \geq 0$ such that $a+b = t$ and $G$ can be obtained from $K_a \square P_{b+1}$ by contracting path edges and deleting edges.
\end{thm}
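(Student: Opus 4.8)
The statement is a biconditional, so the plan is to prove the two implications separately. Throughout I would rely on four tools already available: Corollary~\ref{zfThrotSpanSupers}, which lets me pass between $\thzf$ and standard throttling on spanning supergraphs; the zero forcing extension $\mathcal{E}(G,B,\calf)$ of Definition~\ref{ext}; Lemma~\ref{contractZF}, which controls standard throttling under contraction of a forcing edge; and subgraph monotonicity from Theorem~\ref{zfThrotSubMon}. The guiding picture is that $K_a \square P_{b+1}$, drawn as an $a \times (b+1)$ array with complete (vertical) columns and path (horizontal) rows, is the ``universal'' optimally throttled graph of cost $a+b$, and that both extensions and contractions interact with this picture in a controlled way.

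For the reverse implication, I would first check that $\thz(K_a \square P_{b+1}) \le a+b = t$: taking $B_0$ to be one column (a copy of $K_a$) and letting $\calf_0$ force rightward along the rows, each active column forces the next column in a single time step, so $|B_0| = a$ and $\ptz(K_a \square P_{b+1};\calf_0) = b$. Every path edge is used as a force in $\calf_0$. I would then iterate the first conclusion of Lemma~\ref{contractZF}: contracting a path edge $e$ yields $\calf_0/e$ on the contracted graph with $|B_0/e| + \ptz(\,\cdot\,;\calf_0/e) \le |B_0| + \ptz(\,\cdot\,;\calf_0) = t$, and the remaining path edges persist in $\calf_0/e$, so the step can be repeated. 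After all the prescribed contractions I obtain a graph $M^\ast$ carrying a forcing pair of cost $\le t$, whence $\thzf(M^\ast) \le \thz(M^\ast) \le t$. Since the deletions may be postponed to the end, $G$ is a spanning subgraph of $M^\ast$, and Theorem~\ref{zfThrotSubMon} gives $\thzf(G) \le \thzf(M^\ast) \le t$.

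For the forward implication, given $\thzf(G) \le t$ I would use Corollary~\ref{zfThrotSpanSupers} to fix a spanning supergraph $H \ge G$ with $\thz(H) = \thzf(G) \le t$, then choose a throttling-optimal standard zero forcing set $B$ of $H$ and a force set $\calf$ realizing $\ptz(H;B)$; set $a = |B| \ge 1$ and $b = \ptz(H;B)$, so $a+b = \thz(H) \le t$. Forming the extension $\mathcal{E}(H,B,\calf)$, its vertices form an $a \times (b+1)$ array whose row edges are the path edges. Because each maximal forcing chain induces a (chordless) path in $H$, every non-path edge of $H$ joins two \emph{different} chains, so in the extension each such edge connects two copies lying in a common column, i.e.\ it is a complete (vertical) edge; hence $\mathcal{E}(H,B,\calf)$ is a spanning subgraph of $K_a \square P_{b+1}$. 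Contracting the path edges of $\mathcal{E}(H,B,\calf)$ that join copies of a common vertex recovers $H$, and deleting the edges of $H$ not in $G$ yields $G$. To force $a+b = t$ exactly, I would pad by setting $b' = t-a \ge b$; since $K_a \square P_{b+1}$ arises from $K_a \square P_{b'+1}$ by contracting path edges and $a+b' = t$, the full recipe (contract to $K_a \square P_{b+1}$, delete to $\mathcal{E}(H,B,\calf)$, contract to $H$, delete to $G$) exhibits $G$ as obtained from $K_a \square P_{b'+1}$ by contracting path edges and deleting edges.

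The main obstacle is the geometric claim in the forward direction that $\mathcal{E}(H,B,\calf)$ embeds as a spanning subgraph of $K_a \square P_{b+1}$: this rests on verifying that every non-path edge of $H$ joins two distinct forcing chains, which in turn follows from the fact that maximal forcing chains induce chordless paths in standard zero forcing (a forward chord would block the corresponding force). A secondary, more bookkeeping-flavored point, needed in both directions, is justifying that path-edge contractions and edge deletions can be reordered so that all contractions precede all deletions, and that the edges being contracted remain images of path edges of $K_a \square P_{b+1}$ throughout; I expect this to be routine but it must be stated with care.
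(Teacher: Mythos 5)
Your proposal is correct and follows essentially the same route as the paper: the forward direction via Corollary~\ref{zfThrotSpanSupers} together with the chain $G \leq H \preceq \mathcal{E}(H,B,\calf) \leq K_a \square P_{b+1}$, and the reverse direction by reordering so that contractions of path edges precede deletions, applying Lemma~\ref{contractZF} repeatedly, and finishing with Theorem~\ref{zfThrotSubMon}. The only differences are cosmetic: you are somewhat more explicit than the paper about why maximal forcing chains are induced paths (so that $\mathcal{E}(H,B,\calf)$ embeds in $K_a \square P_{b+1}$) and about the reordering of operations.
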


\begin{proof}
First suppose $\thzf(G) \leq t$. Let $H$ be a spanning supergraph of $G$ such that $H$ has a standard zero forcing set $B$ with $\thz(G; B) \leq t$. Let $\calf$ be a set of $\Z$ forces of $B$ in $H$ such that $\ptz(H; \calf) = \ptz(H; B)$. Let $a = |B|$, $b' = \ptz(H; B) = \thz(G;B) - a$, and $b = t-a$. Then $b' \leq b$ and
\beas 
G \leq H \preceq \mathcal{E}(H, B, \calf) \leq K_a \square P_{b'+1} \leq K_a \square P_{b+1}.
\eeas 
Note that by the construction of $H$ and $\mathcal{E}(H, B, \calf)$, $H$ can be obtained from $K_a \square P_{b+1}$ by contracting path edges. Then $G$ can be obtained from $H$ by deleting edges.

For the other direction, suppose $G' = K_a \square P_{b+1}$ with $a+b = t$ and $G$ can be obtained from $G'$ by contracting path edges and deleting edges. Choose $B' \subseteq V(G')$ such that $B'$ induces a copy of $K_a$ in $G'$ that corresponds to an endpoint of $P_{b+1}$. Note that $B'$ is a standard zero forcing set of $G'$ with set of forces $\calf'$ such that the set $\{uv \ | \ (u \rightarrow v) \in \calf'\}$ is the set of path edges in $G'$. In other words, $\calf'$ propagates along the path edges of $G'$. Also note that $\ptz(G'; \calf') = b$ and $|B| = a$. Let $D$ be a set of edges and let $C$ be a set of path edges in $G'$ such that $G$ can be obtained from $G'$ by first contracting the edges in $C$, then deleting the edges in $D$. Let $H'$ be the graph obtained from $G'$ by contracting the edges in $C$. Note that $D \subseteq E(H')$. By repeated applications of Lemma $\ref{contractZF}$, it is possible to obtain a standard zero forcing set $B \subseteq V(H')$ with set of forces $\calf$ such $|B| \leq |B'|$ and $\ptz(H'; \calf) \leq  \ptz(G'; \calf') = b$. Thus, 
\beas 
\thzf(H') \leq \thz(H') \leq |B| + \ptz(H'; \calf) \leq |B'| + \ptz(G'; \calf') = a + b = t. 
\eeas
By Theorem \ref{zfThrotSubMon}, $\thzf(G) \leq \thzf(H') \leq t$.
\end{proof}
Note that if a fixed integer $t \geq 1$ is given, then the graphs that have $\zf$ throttling number at most $t$ are exactly the graphs given by Theorem \ref{thzfChar}. The following corollary is immediate from this observation.
\begin{cor}
If $t$ is a fixed positive integer, then there are finitely many graphs with $\zf$ throttling number equal to $t$.
\end{cor}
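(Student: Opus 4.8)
The plan is to read off finiteness directly from the characterization in Theorem \ref{thzfChar}, using the observation that precedes the corollary. First I would note that every graph with $\thzf(G) = t$ certainly satisfies $\thzf(G) \leq t$, so it suffices to prove that there are finitely many graphs (up to isomorphism) with $\thzf(G) \leq t$; the set of graphs with $\thzf(G) = t$ is then a subset of a finite set and hence finite.

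By Theorem \ref{thzfChar}, every graph $G$ with $\thzf(G) \leq t$ is obtained from some $K_a \square P_{b+1}$ with $a \geq 1$, $b \geq 0$, and $a + b = t$ by contracting path edges and deleting edges. For a fixed $t$ there are exactly $t$ admissible pairs $(a,b)$, namely $a \in \{1, \ldots, t\}$ with $b = t - a$. Thus the collection of ``source'' graphs $\{K_a \square P_{b+1} \ : \ a + b = t\}$ is a finite list of finite graphs, each having $a(b+1) = a(t-a+1) \leq \lceil (t+1)^2/4 \rceil$ vertices.

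The key step is then that contracting and deleting edges can only produce a graph that is a minor of the source graph, and a fixed finite graph has only finitely many minors up to isomorphism: any minor has at most as many vertices as the source, and there are finitely many graphs on a bounded number of vertices. Taking the union over the $t$ source graphs yields a finite set of isomorphism classes that contains every graph with $\thzf(G) \leq t$, which completes the argument.

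I would expect no genuine obstacle here, since the entire content is the reduction to Theorem \ref{thzfChar} together with the elementary fact that a finite graph has finitely many minors. The only point requiring a word of care is interpreting ``finitely many graphs'' as ``finitely many isomorphism classes,'' because distinct sequences of edge contractions and deletions applied to the same source graph may yield isomorphic outputs; this is harmless, as it can only decrease the count.
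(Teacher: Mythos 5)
Your argument is correct and is exactly the reasoning the paper intends: the corollary is stated as immediate from Theorem \ref{thzfChar}, since the graphs with $\thzf(G) \leq t$ all arise from the finitely many source graphs $K_a \square P_{b+1}$ with $a+b=t$ by contractions and deletions, and a finite graph has only finitely many minors up to isomorphism. Your explicit treatment of the bound on the number of vertices and the isomorphism-class subtlety simply fills in details the paper leaves unstated.
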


The next theorem uses Theorem \ref{thzfChar} to show that $\thzf$ does not inherit the property of minor monotonicity from $\zf$. Recall that the maximum degree of a graph $G$ is denoted as $\Delta(G)$.

\begin{thm}\label{thzfNotMinMon}
The $\zf$ throttling number of a graph is not minor monotone.
\end{thm}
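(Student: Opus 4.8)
The plan is to exploit the one asymmetry in Theorem \ref{thzfChar}: a graph with $\thzf \le t$ is built from $K_a \square P_{b+1}$ (with $a+b=t$) by contracting \emph{path} edges and deleting edges, so contracting a \emph{complete} edge (one lying inside a copy of $K_a$) is precisely the operation the characterization does not allow. I would therefore hunt for a counterexample of the form $G = H/e$, where $H = K_a \square P_{b+1}$ and $e$ is a complete edge, and show $G \preceq H$ yet $\thzf(G) > \thzf(H)$. The first step is to pin down $\thzf(H)$: since $K_a \square P_{b+1}$ has a $K_{a+1}$ minor (giving $\zf \ge a$) while $\operatorname{lc}(K_a \square P_{b+1}) \le a$ forces $\zf \le a$, one gets $\zf(H)=a$ and hence $\thzf(H)\ge a+1$; combining the Butler--Young bound $\thzf(H)\ge\ceil{2\sqrt{a(b+1)}-1}$ with the upper bound $a+b$ from Theorem \ref{thzfChar} pins $\thzf(H)=a+b$ in the balanced range of parameters I intend to use.

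The heart of the matter is the lower bound $\thzf(G) \ge a+b+1$. I would first record a degree lemma extracted from Theorem \ref{thzfChar}: if $\thzf(G)\le t$ then $G$ comes from some $K_{a'}\square P_{b'+1}$ with $a'+b'=t$ by contracting path edges and deleting edges, and a vertex created by contracting a whole row meets at most $a'-1$ vertices in each of the $b'+1$ columns, while deletions only drop degrees; maximizing gives $\Delta(G)\le\max_{a'+b'=t}(a'-1)(b'+1)=\floor{t^{2}/4}$, so $\Delta(G)\ge\floor{t^{2}/4}+1$ already forces $\thzf(G)\ge t+1$. This lemma makes precise why raising degrees (as contraction does) costs throttling and is the clean way to rule out \emph{all} spanning supergraphs at once, since degree is monotone under deletion. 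However, a single complete-edge contraction only raises the maximum degree to about $a+2$, which for a product sits on or below the boundary $\floor{(a+b)^2/4}$, so the degree lemma by itself is typically too weak to gain the extra unit. The extra unit instead comes from a direct propagation obstruction: contracting an interior complete edge $e=(i,c)(j,c)$ fuses the two matching paths running through rows $i$ and $j$, so the end vertices of those rows become a twin pair that shares both of its neighbours and is mutually adjacent; such a pair cannot be cleanly forced, and no spanning supergraph can delete the offending edges, so every set of $\Z$ forces of $G$ must spend either an additional initial vertex or an additional time step. Together with an explicit forcing of $G$ realizing $\thzf(G)\le a+b+1$, this yields $\thzf(G)=a+b+1 > a+b = \thzf(H)$ with $G\preceq H$, contradicting minor monotonicity.

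The main obstacle is exactly this lower bound, because $\thzf$ is a minimum of $\thz$ over \emph{all} spanning supergraphs of $G$, so one cannot argue about $G$ alone. The degree lemma dispatches the supergraph quantifier uniformly but is tight on Cartesian products, so the decisive work is to verify that the twin/deadlock obstruction created by the complete-edge contraction genuinely survives the passage to every supergraph of $G$ — that adding edges can never reroute the forcing process so as to recover throttling $a+b$. Concretely, I would isolate a small balanced instance (for example a complete-edge contraction of $K_3 \square P_3$, where one already sees the two end twins deadlock under ordinary forcing), confirm $\thzf$ of the parent by the bounds above, and then carry out the case analysis over supergraphs for that fixed graph; nailing down that final case analysis is the step I expect to be delicate.
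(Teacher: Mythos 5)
Your counterexample is the right one --- in fact the graph $G$ used in the paper's proof is exactly the graph you propose: the contraction of a single complete edge of $K_3 \square P_3$, an $8$-vertex minor with degree sequence $(5,3,3,3,3,3,3,3)$, and the paper likewise only needs the easy upper bound $\thzf(K_3 \square P_3) \leq 5$ (your extra work pinning down $\zf(K_a\square P_{b+1})=a$ and invoking the Butler--Young bound is correct but unnecessary). The genuine gap is in the lower bound $\thzf(G) \geq 6$. Your degree lemma is sound as far as it goes, but, as you concede, it is too weak here ($\Delta(G)=5 \leq \floor{25/4}=6$), and the argument you substitute for it --- that the adjacent twin pairs created by fusing two rows ``cannot be cleanly forced'' and that ``every set of $\Z$ forces of $G$ must spend either an additional initial vertex or an additional time step'' --- is not a proof but a restatement of the claim. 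The twin obstruction is an obstruction in $G$ itself, whereas $\thzf(G)$ is a minimum of $\thz$ over \emph{all} spanning supergraphs of $G$; added edges can destroy the twin structure, and you explicitly defer the verification that the deadlock ``survives the passage to every supergraph'' to a case analysis you do not carry out. That case analysis is not a small finishing touch: $G$ has $13$ of $28$ possible edges, so there are $2^{15}$ spanning supergraphs, and no uniform invariant besides the one you already rejected (degree) is offered to control them.

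The paper closes exactly this gap by applying Theorem \ref{thzfChar} in full rather than only extracting a degree bound from it: $\thzf(G)\leq 5$ would force $G$ to be obtainable from one of $K_1\square P_5$, $K_2\square P_4$, $K_3\square P_3$, $K_4\square P_2$, $K_5\square P_1$ by contracting path edges and deleting edges, and each case is killed by a vertex count, a maximum-degree comparison, or (for $K_3\square P_3$, where exactly one path-edge contraction is forced and symmetry leaves a unique candidate $G'$) a degree-sequence comparison followed by a single isomorphism check on the unique remaining graph $H$. This converts the quantifier over all spanning supergraphs into a finite check against four product graphs, which is precisely the step your outline is missing. You have the right theorem in hand; the fix is to use it as the \emph{entire} lower-bound argument (tracking which path-edge contractions and deletions are even numerically possible, then comparing degree sequences and one isomorphism type) rather than as a source of a single degree inequality supplemented by an unverified forcing heuristic.
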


\begin{proof}
Consider the graph $K_3 \square P_3$ and let $B \subseteq V(K_3 \square P_3)$ be the three vertices in a copy of $K_3$ that corresponds to an endpoint of $P_3$. Since $\pt_{\zf}(K_3 \square P_3; B) \leq 2$, $\thzf(K_3 \square P_3) \leq 5$. Let $G$ be the minor of $K_3 \square P_3$ shown on the left in Figure \ref{GH}. The following argument shows that $G$ cannot be obtained from $K_a \square P_{b+1}$ with $a + b = 5$ by contracting path edges and/or deleting edges. Since $|V(K_1 \square P_5)| = |V(K_5 \square K_1)| = 5 < 8 =|V(G)|$, $G$ cannot be obtained from $K_1 \square P_5$ or $K_5 \square P_1$ without adding vertices. Note that $|V(K_2 \square P_4)| = |V(K_4 \square P_2)| = 8$ which means that contractions are not allowed in order to obtain $G$ from those graphs. Since $\Delta(K_2 \square P_4) = 3$, $\Delta(K_4 \square P_2) = 4$, and $\Delta(G) = 5$, $G$ cannot be obtained from those graphs by deleting edges. To obtain $G$ from $K_3 \square P_3$ using the operations in Theorem \ref{thzfChar}, exactly one contraction of a path edge is required since $|V(G)| = 8$ and $|V(K_3 \square P_3)| = 9$. Note that by the symmetry of $K_3 \square P_3$, contracting any single path edge yields the same graph. Let $G'$ be the graph obtained by contracting a path edge of $K_3 \square P_3$ shown in the middle of Figure \ref{GH}. The degree sequences of $G'$ and $G$ are $(5,4,4,3,3,3,3,3)$ and $(5,3,3,3,3,3,3,3)$ respectively. Thus, the only possible way to delete edges in $G'$ and obtain $G$ is by deleting the edge between the two vertices of degree 4. Delete this edge from $G'$ and let $H$ be the resulting graph shown on the right in Figure \ref{GH}. If $v_1$ and $v_2$ are the vertices of degree $5$ in $G$ and $H$ respectively, then $H-v_2$ contains a $6$-cycle and $G - v_1$ does not. Therefore, $G$ is not isomorphic to $H$ and $G$ cannot be obtained from $K_a \square P_{b+1}$ with $a + b = 5$ by contracting path edges and/or deleting edges. By Theorem \ref{thzfChar}, this means that $\thzf(G) \geq 6$. Since $\thzf(K_3 \square P_3) \leq 5$, it follows that $\thzf$ is not minor monotone. \qedhere

\end{proof}
\begin{figure}[H] \begin{center}
\scalebox{1}{\includegraphics{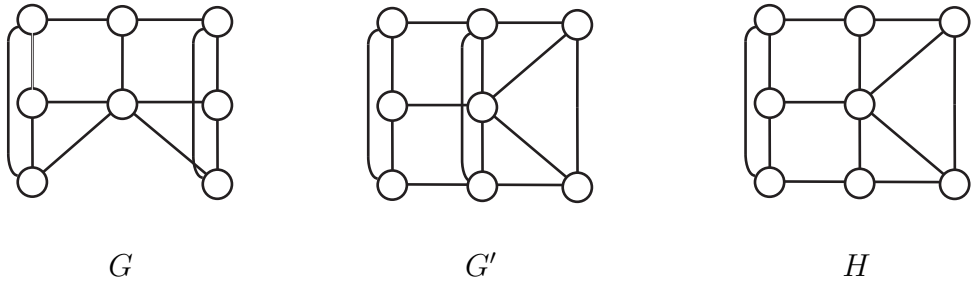}}\\
\end{center}
\hspace{1.2 in} $G$ \hspace{1.63 in} $G'$ \hspace{1.7 in} $H$
\begin{center}
\caption{The graphs $G$, $G'$, and $H$ are minors of $K_3 \square P_3$ used in the proof of Theorem \ref{thzfNotMinMon}.}\label{GH}
\end{center}
\end{figure}

In the next section, the proof of Theorem \ref{thzfChar} is modified in order to characterize standard throttling.
\end{section}

\begin{section}{A Characterization for Standard Throttling}\label{sectStandThrot}
Since there are graphs (e.g., stars) for which $\thz \neq \thzf$, it is clear that the characterization in Theorem \ref{thzfChar} does not also characterize $\thz$. However, the only part of this characterization that does not work for standard throttling is the deletion of edges. In fact, Example \ref{StarWheelEx} demonstrates that standard throttling is not spanning subgraph monotone. The next theorem shows how $\thz$ can be characterized by being more careful about which edges can be deleted. 
\begin{thm}\label{throtChar}
Given a graph $G$ and a positive integer $t$, $\thz(G) \leq t$ if and only if there exists integers $a \geq 0$ and $b \geq 1$ such that $a+b = t$ and $G$ can be obtained from $K_a \square P_{b+1}$ by contracting path edges and deleting complete edges.
\end{thm}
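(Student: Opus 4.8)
The plan is to mirror the proof of Theorem \ref{thzfChar}, establishing both implications, but with every appeal to subgraph monotonicity (which fails for $\thz$, cf. Example \ref{StarWheelEx}) replaced by a direct argument that exploits the special role of complete edges. For the forward direction, suppose $\thz(G) \le t$ and choose a standard zero forcing set $B$ realizing $\thz(G;B) \le t$, together with a set of $\Z$ forces $\calf$ of $B$ with $\ptz(G;\calf) = \ptz(G;B)$. Set $a = |B|$, $b' = \ptz(G;B)$, and $b = t-a \ge b'$. I would form the extension $\mathcal{E}(G,B,\calf)$ and record the chain
\[
G \preceq \mathcal{E}(G,B,\calf) \le K_a \square P_{b'+1} \le K_a \square P_{b+1}.
\]
The crucial observation is that the non-path edges introduced in step 2 of Definition \ref{ext} join two copies lying in a \emph{common column} of the array, so they are precisely complete edges of $K_a \square P_{b'+1}$; hence $\mathcal{E}(G,B,\calf)$ arises from $K_a \square P_{b'+1}$ by deleting only complete edges. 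Recovering $G$ from $\mathcal{E}(G,B,\calf)$ merges the $\tau(v)$ consecutive copies of each vertex $v$, which are joined by horizontal (path) edges, so that step contracts only path edges; and passing from $K_a \square P_{b+1}$ down to $K_a \square P_{b'+1}$ collapses the $b-b'$ excess columns, again by contracting path edges. Since every contracted edge is a path edge and every deleted edge is a complete edge, composing these operations exhibits $G$ as obtained from $K_a \square P_{b+1}$ by contracting path edges and deleting complete edges.

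For the reverse direction, suppose $G$ arises from $G' = K_a \square P_{b+1}$ (with $a+b=t$) by contracting a set $C$ of path edges and deleting a set $D$ of complete edges. Let $B'$ induce the copy of $K_a$ at an endpoint of $P_{b+1}$, so that $B'$ is a standard zero forcing set with a set of forces $\calf'$ propagating column by column along the path edges, $|B'| = a$, and $\ptz(G';\calf') = b$. I would first delete $D$ to form $G'' = G'-D$, and here is the key point: in the column-by-column process every vertex of a given column is colored at the same time, so each complete edge always joins two equally colored vertices and never creates an additional white neighbor. Consequently no force of $\calf'$ is invalidated, and $B'$ remains a zero forcing set of $G''$ with unchanged propagation time $b$. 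I would then contract the edges of $C$ one at a time, applying Lemma \ref{contractZF}; since $\calf'$ uses every path edge, each edge being contracted is used in a force in the current set of forces (a property that persists after each contraction, because contracting a path edge merely splices together the two flanking forces of its chain). Lemma \ref{contractZF} then yields a zero forcing set $B$ of $G$ with $|B| \le a$ and $\ptz(G;\calf) \le b$, whence $\thz(G) \le |B| + \ptz(G;\calf) \le a+b = t$, with no recourse to subgraph monotonicity.

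The main obstacle, and the reason the statement restricts deletions to complete edges, is exactly this reverse step. Deleting a path edge could sever a forcing chain and raise the propagation time, so one cannot delete edges for free as in Theorem \ref{thzfChar}; the entire argument hinges on showing that complete-edge deletions leave the canonical column-by-column forcing untouched, so that both the zero forcing set and its propagation time survive the deletion. I would finally check the small and degenerate configurations (in particular the case $b'=0$ and the verification that the prescribed ranges of $a$ and $b$ can be attained), which are routine but must be reconciled with the stated bounds.
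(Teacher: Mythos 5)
Your proposal is correct and follows essentially the same route as the paper: the forward direction uses the extension $\mathcal{E}(G,B,\calf)$ and the chain $G \preceq \mathcal{E}(G,B,\calf) \le K_a \square P_{b'+1} \le K_a \square P_{b+1}$, and the reverse direction deletes the complete edges first (observing that the column-by-column set of forces $\calf'$ is untouched, since no deleted edge is used to perform a force) and then contracts the path edges via repeated application of Lemma \ref{contractZF}. Your added remarks---that the non-path edges in step 2 of Definition \ref{ext} land in a common column and are therefore complete edges, and that this is precisely why $\thz$, unlike $\thzf$, cannot tolerate arbitrary edge deletions---are accurate elaborations of what the paper leaves implicit.
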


\begin{proof}
First suppose $\thz(G) \leq t$. Let $B \subseteq V(G)$ be a standard zero forcing set of $G$ with $\thz(G; B) \leq t$ and let $\calf$ be a set of standard forces of $B$ in $G$ with $\ptz(G; \calf) = \ptz(G; B)$. Let $a = |B|$, $b' = \ptz(G; B) = \thz(G;B) - a$, and $b = t-a$. Then $b' \leq b$ and
\beas 
G \preceq \mathcal{E}(G, B, \calf) \leq K_a \square P_{b'+1} \leq K_a \square P_{b+1}.
\eeas 
Note that by the construction of $\mathcal{E}(G, B, \calf)$, $G$ can be obtained from $K_a \square P_{b+1}$ by contracting path edges and deleting complete edges.

For the other direction, suppose $G' = K_a \square P_{b+1}$ with $a+b = t$ and $G$ can be obtained from $G'$ by contracting path edges and deleting complete edges. Choose $B' \subseteq V(G')$ such that $B'$ induces a copy of $K_a$ in $G'$ that corresponds to an endpoint of $P_{b+1}$. Note that $B'$ is a standard zero forcing set of $G'$ with set of forces $\calf'$ such that the set $\{uv \ | \ (u \rightarrow v) \in \calf'\}$ is the set of path edges in $G'$. In other words, $\calf'$ propagates along the path edges of $G'$. Also note that $\ptz(G'; \calf') = b$ and $|B'| = a$. Let $D$ be a set of complete edges in  $G'$ and let $C$ be a set of path edges in $G'$ such that $G$ can be obtained from $G'$ by first deleting the edges in $D$, then contracting the edges in $C$. Let $H'$ be the graph obtained from $G'$ by deleting the edges in $D$. Since no edge in $D$ is used to perform a force in $\calf'$, $\calf'$ is still a set of forces of $B'$ in $H'$ with $\ptz(H'; \calf') \leq \ptz(G'; \calf') = b$. Also, $G$ can be obtained from $H'$ by contracting the edges in $C$. By repeated applications of Lemma $\ref{contractZF}$, it is possible to obtain a standard zero forcing set $B \subseteq V(G)$ with set of forces $\calf$ such $|B| \leq |B'|$ and $\ptz(G; \calf) \leq  \ptz(H'; \calf') \leq b$. Thus, 
\beas 
\thz(G) \leq |B| + \ptz(G; \calf) \leq |B'| + \ptz(H'; \calf') \leq |B'| + \ptz(G'; \calf') = a + b = t. \qedhere
\eeas
\end{proof} 
\begin{cor}
If $t$ is a fixed positive integer, then there are finitely many graphs $G$ with standard throttling number equal to $t$.
\end{cor}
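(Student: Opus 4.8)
The plan is to apply Theorem \ref{throtChar} directly and then bound the number of graphs it produces. Any graph $G$ with $\thz(G) = t$ in particular satisfies $\thz(G) \leq t$, so by Theorem \ref{throtChar} there exist integers $a \geq 0$ and $b \geq 1$ with $a + b = t$ such that $G$ can be obtained from $K_a \square P_{b+1}$ by contracting path edges and deleting complete edges. Thus it suffices to show that, for a fixed $t$, only finitely many graphs (up to isomorphism) arise in this way.

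First I would observe that for a fixed $t$ there are only finitely many pairs $(a,b)$ of integers with $a \geq 0$, $b \geq 1$, and $a + b = t$; indeed there are at most $t$ of them. Each such pair determines a single finite graph $K_a \square P_{b+1}$ on $a(b+1)$ vertices with a fixed finite edge set. Next, for each of these finitely many base graphs, the operations permitted by Theorem \ref{throtChar} amount to choosing a subset of the path edges to contract and a subset of the complete edges to delete. Since $E(K_a \square P_{b+1})$ is finite, there are only finitely many such subsets, and hence only finitely many graphs obtainable from a fixed $K_a \square P_{b+1}$ in this manner.

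Combining these two finiteness observations, the collection of all graphs obtainable via Theorem \ref{throtChar} over all admissible pairs $(a,b)$ is finite up to isomorphism. Every graph $G$ with $\thz(G) \leq t$ lies in this collection, so in particular only finitely many isomorphism classes satisfy $\thz(G) = t$, which is the desired conclusion.

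This corollary is essentially an immediate counting consequence of Theorem \ref{throtChar}, so there is no real obstacle in the argument. The only point requiring care is that the statement must be read as asserting finiteness up to isomorphism, since otherwise there are trivially infinitely many labeled graphs with a given throttling number; I would note this explicitly to avoid ambiguity. The analogous statement for $\thzf$ follows the same way from Theorem \ref{thzfChar}, the only difference being that arbitrary edge deletions (rather than only complete-edge deletions) are permitted, which still leaves finitely many outcomes from each finite base graph.
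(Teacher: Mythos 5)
Your argument is correct and matches the paper's (implicit) reasoning: the paper states this corollary as an immediate consequence of Theorem \ref{throtChar}, and the intended justification is exactly your counting --- finitely many pairs $(a,b)$ with $a+b=t$, and finitely many choices of edges to contract and delete in each finite graph $K_a \square P_{b+1}$. Your remark about reading the statement up to isomorphism is a reasonable clarification but not a substantive difference.
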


Suppose $G$ is a graph on $n$ vertices and $t$ is a postive integer with $\thz(G) \leq t$. Note that $t$ can be used to bound the number of vertices in $G$. Since $\ceil{2\sqrt{n} - 1} \leq \thz(G) \leq t$, $|V(G)| = n \leq \frac{(t+1)^2}{4}$. By Corollary \ref{thzfLowerBd}, this bound still holds when $\thzf(G) \leq t$.

In order to construct forcing sets in paths and cycles that are optimal for throttling, it has been useful to ``snake'' the graph in some way. This idea was used for $\thz(P_n)$ in \cite{BY13Throt}, and again for $\thz(C_n)$ in \cite{PSD}. A ``snaking'' construction was also used for $\thzf(C_n)$ in Proposition \ref{zfthrotCycle} (see Figure \ref{CycleSnake}). Note that in most of these cases, the ``snaked'' graph is a spanning subgraph or a minor of a graph of the form $K_a \square P_{b+1}$. It is interesting to observe that the ``snaking" method is present in Theorems \ref{thzfChar} and \ref{throtChar}.

\end{section}

\begin{section}{Extreme Throttling}\label{sectApps}
This section uses Theorems \ref{thzfChar} and \ref{throtChar} to quickly characterize graphs with low throttling numbers. The connection between $\thzf$ and the independence number of a graph is also investigated. This connection is used to give a necessary condition for graphs $G$ with $\thzf(G) = n$.

For a fixed positive integer $t$, Theorem \ref{thzfChar} characterizes all graphs $G$ with $\thzf(G) \leq t$. Clearly $\thzf(G) = t$ if and only if $\thzf(G) \leq t$ and $\thzf(G) \nleq t-1$. So all graphs with $\thzf(G) = t$ can be characterized by applying Theorem \ref{thzfChar} and removing the graphs with $\zf$ throttling number at most $t-1$. This is done by hand for $t \leq 3$ as follows.

\begin{obs}
The graph $G = K_1$ is the only graph with $\thzf(G) = 1$.
\end{obs}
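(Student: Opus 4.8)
The plan is to apply the characterization in Theorem \ref{thzfChar} with $t = 1$ and then confirm that no graph has $\zf$ throttling number strictly below $1$, so that the ``remove graphs with $\thzf \leq t-1$'' step is vacuous. First I would determine all graphs with $\thzf(G) \leq 1$. By Theorem \ref{thzfChar}, such a graph must be obtainable from $K_a \square P_{b+1}$ by contracting path edges and deleting edges, where $a \geq 1$, $b \geq 0$, and $a + b = 1$. The only admissible pair is $a = 1$, $b = 0$, which gives $K_1 \square P_1 = K_1$. Since $K_1$ has a single vertex and no edges, it has no path edges to contract and no edges to delete; moreover, the allowed operations never add vertices. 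Hence the only graph obtainable this way is $K_1$ itself, so $\thzf(G) \leq 1$ holds if and only if $G \cong K_1$.

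Next I would verify that $\thzf(K_1) = 1$ rather than something smaller, and more generally that $\thzf(G) \geq 1$ for every (nonempty) graph $G$. For any graph with at least one vertex, a forcing set $B$ with finite propagation time must satisfy $|B| \geq 1$: if $B = \emptyset$ then no vertex is ever colored blue, the $\zf$ final coloring is empty rather than $V(G)$, and $\ptzf(G;B) = \infty$. Therefore $\thzf(G) = |B| + \ptzf(G;B) \geq 1$ whenever this quantity is finite. (Alternatively, this is immediate from the bound $\thzf(G) \geq \ceil{2\sqrt{n}-1}$ in Corollary \ref{thzfLowerBd}, which equals $1$ when $n = 1$.) For $K_1$, taking $B = V(K_1)$ gives $|B| = 1$ and $\ptzf(K_1; B) = 0$, so $\thzf(K_1) = 1$, confirming equality.

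Combining the two steps yields the claim: $\thzf(G) = 1$ exactly when $\thzf(G) \leq 1$ and $\thzf(G) \geq 1$, which by the above forces $G \cong K_1$, and $K_1$ indeed attains the value $1$. I do not anticipate a genuine obstacle here, as the argument is a direct unpacking of Theorem \ref{thzfChar} together with the trivial lower bound; the only point requiring a moment of care is ruling out $\thzf(G) = 0$, i.e., observing that an empty blue set cannot produce a finite propagation time on a nonempty graph.
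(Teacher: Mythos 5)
Your proposal is correct and follows exactly the method the paper indicates for these low-throttling characterizations: apply Theorem \ref{thzfChar} with $t=1$ (forcing $a=1$, $b=0$, hence $K_1\square P_1 = K_1$) and note that no graph can have $\zf$ throttling number below $1$. The paper states this as an observation without a written proof, but your argument is precisely the intended one, matching the template used for the $t=2$ and $t=3$ propositions that follow.
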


\begin{prop}
For a graph $G$, $\thzf(G) = 2$ if and only if $G = K_2$ or $G = 2K_1$.
\end{prop}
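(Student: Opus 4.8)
The plan is to invoke the characterization of Theorem \ref{thzfChar} with $t = 2$ and then simply enumerate the finite family of graphs it produces, discarding those already accounted for by the preceding observation. First I would list the admissible pairs $(a,b)$ with $a \geq 1$, $b \geq 0$, and $a + b = 2$; there are exactly two, namely $(1,1)$ and $(2,0)$. For each pair I would write down the base graph $K_a \square P_{b+1}$ and determine every graph obtainable from it by contracting path edges and deleting edges.

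For $(a,b) = (1,1)$ the base graph is $K_1 \square P_2 = K_2$, whose single edge is a path edge: doing nothing yields $K_2$, deleting the edge yields $2K_1$, and contracting the path edge yields $K_1$. For $(a,b) = (2,0)$ the base graph is $K_2 \square P_1 = K_2$; here $P_1$ has no edges, so there are no path edges available for contraction, and the only options are to keep or delete the single complete edge, giving $K_2$ or $2K_1$. Taking the union over both pairs, the graphs with $\thzf(G) \leq 2$ are exactly $K_1$, $K_2$, and $2K_1$. To finish, I would remove the graphs with $\thzf(G) \leq 1$: by the preceding observation, $K_1$ is the only graph with $\thzf(G) = 1$, so the graphs with $\thzf(G) = 2$ are precisely $K_2$ and $2K_1$, which settles both directions of the equivalence simultaneously.

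The enumeration is entirely routine, and the only step requiring genuine care is the degenerate factor $P_1$ in the pair $(2,0)$, where one must observe that no path edges exist to contract, so this pair contributes nothing beyond what $(1,1)$ already gives. As a sanity check on completeness, note that deleting edges preserves the vertex count while contracting edges decreases it, and both base graphs have at most two vertices; hence no graph on three or more vertices can arise, confirming that the list $\{K_1, K_2, 2K_1\}$ is exhaustive.
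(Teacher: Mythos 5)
Your proposal is correct and follows essentially the same route as the paper: apply Theorem \ref{thzfChar} with $t=2$, enumerate the graphs obtainable from $K_1 \square P_2$ and $K_2 \square P_1$ to get $\{K_1, K_2, 2K_1\}$, and then discard $K_1$ as the unique graph with $\thzf$ equal to $1$. Your treatment is simply a more explicit version of the paper's enumeration, including the correct observation that $K_2 \square P_1$ has no path edges to contract.
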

\begin{proof}
By Theorem \ref{thzfChar}, $\thzf(G) \leq 2$ if and only if $G$ can be obtained from $K_1 \square P_2 = K_2$ or $K_2 \square P_1 = K_2$ by deleting edges and contracting path edges. Thus, $\thzf(G) \leq 2$ if and only if $G \in \{K_1, K_2, 2K_1\}$. Since $G = K_1$ is the only graph that satisfies $\thzf(G) = 1$, $\thzf(G) = 2$ if and only if $G \in \{K_2, 2K_1\}$.
\end{proof}
\begin{prop}
For a graph $G$, $\thzf(G) = 3$ if and only if $G \in \mathcal{G}$ where 
\beas 
\mathcal{G} = \{C_4, P_4, 2K_2, K_1 \dot{\cup} P_3, K_2 \dot{\cup} 2K_1, 4K_1, K_3, P_3, K_1 \dot{\cup} K_2, 3K_1\}.
\eeas 
\end{prop}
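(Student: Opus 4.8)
The plan is to invoke Theorem \ref{thzfChar} with $t = 3$ and reduce the problem to a finite enumeration, then subtract off the graphs already known to have throttling number at most $2$. First I would list the pairs $(a,b)$ with $a \geq 1$, $b \geq 0$, and $a + b = 3$; these are $(1,2)$, $(2,1)$, and $(3,0)$, giving the three product graphs $K_1 \square P_3 = P_3$, $K_2 \square P_2 = C_4$, and $K_3 \square P_1 = K_3$. By Theorem \ref{thzfChar}, a graph $G$ satisfies $\thzf(G) \leq 3$ if and only if $G$ can be obtained from one of these three graphs by contracting path edges and deleting edges.

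Next I would enumerate, for each source graph, every graph obtainable by contracting a subset of the path edges and then deleting a subset of the remaining edges. For $P_3$ (both of whose edges are path edges) and for $K_3$ (which has no path edges, so only deletions are permitted), the enumeration is short and yields only graphs on at most three vertices. The main work is $C_4 = K_2 \square P_2$: here I would fix coordinates so that the two path edges are one pair of opposite edges of the $4$-cycle and the two complete edges are the other pair, then split into cases according to how many path edges are contracted (zero, one, or two). Contracting one path edge of $C_4$ collapses it to $K_3$, and contracting both collapses it to $K_2$; within each case I would delete every subset of the remaining edges and record the isomorphism type. The point requiring care is that when deleting two of the four edges of $C_4$ without contracting, one must distinguish two \emph{adjacent} edges (which gives $K_1 \dot{\cup} P_3$) from two \emph{opposite} edges (which gives $2K_2$); this is the one place where the count is easy to get wrong.

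Taking the union of the three lists gives the set of all graphs with $\thzf(G) \leq 3$, which I expect to be exactly $\mathcal{G} \cup \{K_1, K_2, 2K_1\}$. Finally, since $\thzf(G) = 3$ holds precisely when $\thzf(G) \leq 3$ but $\thzf(G) \nleq 2$, I would remove the graphs with throttling number at most $2$, which by the preceding proposition are exactly $K_1$, $K_2$, and $2K_1$; what remains is precisely $\mathcal{G}$. I expect the only genuine obstacle to be the bookkeeping in the $C_4$ case—ensuring that no isomorphism type is double-counted or omitted across the various contraction/deletion combinations—rather than any conceptual difficulty, since Theorem \ref{thzfChar} reduces the entire statement to this finite check.
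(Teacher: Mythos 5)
Your proposal is correct and follows essentially the same route as the paper: apply Theorem \ref{thzfChar} with $t=3$ to reduce to the three products $P_3$, $C_4$, $K_3$, enumerate the graphs reachable by contracting path edges and deleting edges, and then remove $K_1$, $K_2$, $2K_1$ using the preceding results. The paper compresses your case analysis into the single observation that the reachable graphs are exactly the subgraphs of $C_4$ and $K_3$, but the underlying argument is the same.
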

\begin{proof}
By Theorem \ref{thzfChar}, $\thzf(G) \leq 3$ if and only if $G$ can be obtained from $K_3 \square P_1 = K_3$, $K_2 \square P_2 = C_4$, or $K_1 \square P_3 = P_3$ by deleting edges and contracting path edges. Let $\mathcal{H}$ be the set of all subgraphs of $C_4$ and $K_3$. It is clear that $\thzf(G) \leq 3$ if and only if $G \in \mathcal{H}$. Note that $\mathcal{G} = \mathcal{H} \setminus \{K_1, K_2, 2K_1\}$.
\end{proof}

Theorems \ref{thzfChar} and \ref{throtChar} reinforce the fact that for any graph $G$, $\thzf(G) \leq \thz(G)$. Let $G$ be a graph. Since $\thz$ is bounded below by $\thzf$, if there is a subset $B \subseteq V(G)$ with $\thz(G; B) = \thzf(G)$,  then $\thz(G) = \thzf(G)$. 

\begin{cor}
If $t \in \{1, 2, 3\}$ and $G \notin \{K_1 \dot{\cup} P_3, K_2 \dot{\cup} 2K_1, 4K_1\}$, then $\thz(G) = t$ if and only if $\thzf(G) = t$.
\end{cor}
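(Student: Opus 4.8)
The plan is to collapse the biconditional onto a single finite verification and then invoke the observation immediately preceding the corollary, namely that $\thz(G)=\thzf(G)$ whenever some standard zero forcing set $B$ attains $\thz(G;B)=\thzf(G)$ (this is forced by $\thz(G)\le \thz(G;B)=\thzf(G)\le\thz(G)$). Accordingly, I would first isolate the key claim: \emph{for every graph $G\notin\{K_1\dot{\cup}P_3,\,K_2\dot{\cup}2K_1,\,4K_1\}$ with $\thzf(G)\le 3$ one has $\thz(G)=\thzf(G)$}. Granting the claim, both directions follow at once. For the ``if'' direction, $\thzf(G)=t\le 3$ gives $\thz(G)=\thzf(G)=t$ directly. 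For the ``only if'' direction, $\thz(G)=t\le 3$ yields $\thzf(G)\le\thz(G)=t\le 3$ from $\thzf\le\thz$, so the claim applies and gives $\thzf(G)=\thz(G)=t$.

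To prove the claim I would use the complete enumerations already established: the only graph with $\thzf=1$ is $K_1$, those with $\thzf=2$ are $K_2$ and $2K_1$, and those with $\thzf=3$ are exactly the ten graphs of $\mathcal{G}$. Removing the three excluded graphs (each of which has $\thzf=3$) leaves the finite list
\[
\{K_1,\,K_2,\,2K_1,\,3K_1,\,P_3,\,P_4,\,C_4,\,K_3,\,2K_2,\,K_1\dot{\cup}K_2\},
\]
and for each it suffices to exhibit a standard zero forcing set $B$ with $\thz(G;B)=\thzf(G)$. For $P_3$, $P_4$, and $C_4$ I would simply quote $\thz(P_n)=\ceil{2\sqrt{n}-1}$ together with Theorem \ref{standThrotCycle}, each of which returns the value $3$ matching $\thzf$. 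For $K_3$ I would take any two vertices (so the third is forced in one step); for $2K_2$ and $K_1\dot{\cup}K_2$ I would take one vertex from each edge-containing component together with every isolated vertex (forcing the remaining vertices in one step); and $K_1,K_2,2K_1,3K_1$ are handled by taking every isolated vertex plus one endpoint of each edge. In every case $\thz(G;B)=\thzf(G)$, so the observation gives $\thz(G)=\thzf(G)$ and the claim follows.

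For completeness I would also record why the three graphs are genuinely excluded, namely that each satisfies $\thzf=3$ but $\thz=4$ and so is an actual counterexample to the biconditional. Indeed $4K_1$ admits no force whatsoever, forcing $\thz(4K_1)=4$; while in $K_2\dot{\cup}2K_1$ and $K_1\dot{\cup}P_3$ the isolated vertices must all begin blue and a further time step is still required, again giving $\thz=4$. These are precisely the small graphs on which hopping strictly lowers the throttling number.

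The only delicate point is logical rather than combinatorial: in the ``only if'' direction one must preclude the possibility that $\thz(G)=t$ while $\thzf(G)$ drops strictly below $t$. Packaging the verification as the single uniform claim ``$\thz=\thzf$ on all non-excluded graphs with $\thzf\le 3$'' dissolves this issue, since any such strict drop in $\thzf$ would force an equal drop in $\thz$, contradicting $\thz(G)=t$. Everything else is a routine check against enumerations already in hand, so I expect no substantive obstacle.
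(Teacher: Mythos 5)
Your proposal is correct and follows essentially the same route as the paper: the paper's proof likewise reduces everything to exhibiting, for each non-excluded graph with $\thzf(G)\leq 3$, a standard zero forcing set $B$ with $\thz(G;B)=\thzf(G)$, and to noting that the three excluded graphs have $\thzf=3$ but $\thz=4$. You simply carry out the finite verification explicitly (and spell out the logical reduction of the biconditional) where the paper asserts it in one line.
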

\begin{proof}
Let $\mathcal{J} = \{K_1 \dot{\cup} P_3, K_2 \dot{\cup} 2K_1, 4K_1\}$. For each graph $G$ with $\thzf(G) \leq 3$ and $G \notin \mathcal{J}$, it is possible to produce a standard zero forcing set $B \subseteq V(G)$ with $\thz(G; B) = \thzf(G)$. If $G \in \mathcal{J}$, then $\thzf(G) = 3$, but $\thz(G) = 4$ because forcing by a hop is no longer allowed.
\end{proof}

High $\zf$ throttling values are harder to characterize. Clearly, $\thzf(K_n) = \thz(K_n) = n$. Let $(K_n)_e$ be the complete graph on $n$ vertices minus a single edge. It is also clear that $\thzf((K_n)_e) = \thz((K_n)_e) = n$. More generally, $\thzf(G) = n$ implies that $\thz(G) = n$. For a given graph $G$, the following proposition gives an upper bound for $\thzf(G)$ in terms of the independence number, $\alpha(G)$. 

\begin{prop}\label{alphabound}
If $G$ is a graph of order $n$, then $\thzf(G) \leq n - \alpha(G) + \ceil{2 \sqrt{\alpha(G)} - 1}$.
\end{prop}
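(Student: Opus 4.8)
The plan is to color a minimum vertex cover blue and then force the remaining independent set entirely by hops. Let $S$ be a maximum independent set of $G$, so $|S| = \alpha(G)$, and let $U = V(G) \setminus S$ be the corresponding vertex cover, with $|U| = n - \alpha(G)$. The key structural fact is that since $S$ is independent, every vertex of $S$ has all of its neighbors in $U$. Consequently, if all of $U$ is colored blue at time $0$, then any active blue vertex of $S$ automatically has an entirely blue neighborhood and may therefore force any white vertex by a hop. This observation will let me transplant an optimal forcing scheme for the edgeless graph on $S$ directly into $G$.

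First I would record that $\thzf(\alpha K_1) = \ceil{2\sqrt{\alpha(G)} - 1}$, writing $\alpha = \alpha(G)$ for brevity. The lower bound is immediate from Corollary \ref{thzfLowerBd}. For the upper bound, note that $\alpha K_1$ is a spanning subgraph of the path $P_\alpha$, so Theorem \ref{zfThrotSubMon} gives $\thzf(\alpha K_1) \leq \thzf(P_\alpha) = \ceil{2\sqrt{\alpha} - 1}$, using the value of $\thzf(P_n)$ established earlier in this section.

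Next, choose a subset $B_0 \subseteq S$ together with a set of $\zf$ forces $\calf_0$ of $B_0$, viewed in the edgeless graph on the vertex set $S$, that realizes $\thzf(\alpha K_1)$; every force in $\calf_0$ is then a hop. Set $B = U \cup B_0$. I would argue that $\calf_0$ is also a valid set of $\zf$ forces of $B$ in $G$ whose propagation time is no larger: at each time step a vertex $s \in S$ is active and eligible to hop in $G$ exactly when it is in the edgeless graph on $S$, because its entire $G$-neighborhood lies in $U$ and $U$ is blue from time $0$, and because being forced imposes no condition on the receiving vertex. Hence the vertices of $S$ become blue at exactly the same times as under $\calf_0$, while $U$ is already blue at time $0$, so $\ptzf(G; B) \leq \ptzf(\alpha K_1; B_0)$. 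Combining these facts gives
\[
\thzf(G) \leq |B| + \ptzf(G; B) \leq (n - \alpha) + \thzf(\alpha K_1) = n - \alpha(G) + \ceil{2\sqrt{\alpha(G)} - 1},
\]
which is the desired bound.

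I expect the main obstacle to be the middle step: verifying rigorously that $\calf_0$ remains a legal set of $\zf$ forces once it is transplanted into $G$ and that its propagation time does not increase. This amounts to checking that both the ``active'' status and the hop-eligibility of each vertex of $S$ depend only on the forces performed within $S$, which holds precisely because $S$ is independent and $U$ is entirely blue; the remaining bookkeeping (the initial coloring of $U$ and the fact that vertices of $U$ never need to force) is routine. Minor edge cases, such as $\alpha(G) = 1$ where $P_\alpha = K_1$, are handled directly.
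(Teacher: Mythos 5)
Your proposal is correct and follows essentially the same route as the paper: color a minimum vertex cover blue, throttle the remaining independent set as an edgeless graph using subgraph monotonicity (Theorem \ref{zfThrotSubMon}) applied to a supergraph with throttling number $\ceil{2\sqrt{\alpha}-1}$, and observe that the hops transfer to $G$ because the cover is already blue. The only cosmetic difference is that you compare the edgeless graph to $P_\alpha$ while the paper compares it to $C_{\alpha}$, and you spell out the transplanting of the force set in more detail than the paper does.
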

\begin{proof}
Suppose $G$ is a graph with independent set $A \subseteq V(G)$. Let $B = V(G) \setminus A$. Note that $G - B$ has no edges and by Theorem \ref{zfThrotSubMon}, $\thzf(G-B) \leq \thzf(C_{|A|}) = \ceil{2 \sqrt{|A|} - 1}$. Choose $C \subseteq A$ such that $\thzf(G-B, C) = \ceil{2 \sqrt{|A|} - 1}$. Then $B \cup C$ is a $\zf$ forcing set of $G$ with $\ptzf(G; B \cup C) \leq \ptzf(G-B, C)$. Thus, $\thzf(G) \leq n - |A| + \ceil{2 \sqrt{|A|} - 1}$. If $A$ satisfies $|A| = \alpha(G)$, the desired result is obtained.
\end{proof}

Since $\alpha(K_{1, n-1}) = n-1$, Example \ref{StarWheelEx} shows that the bound in Proposition \ref{alphabound} is tight.

\begin{cor}\label{alphaCor}
If $G$ is a graph with $\thzf(G) = n$, then $\alpha(G) \leq 3$.
\end{cor}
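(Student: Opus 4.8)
The plan is to derive the statement directly from Proposition \ref{alphabound}, so essentially no new machinery is needed. First I would substitute the hypothesis $\thzf(G) = n$ into the inequality $\thzf(G) \leq n - \alpha(G) + \ceil{2\sqrt{\alpha(G)} - 1}$ supplied by Proposition \ref{alphabound}. This gives $n \leq n - \alpha(G) + \ceil{2\sqrt{\alpha(G)} - 1}$, and after cancelling $n$ from both sides I obtain the self-referential inequality
\beqs
\alpha(G) \leq \ceil{2\sqrt{\alpha(G)} - 1}.
\eeqs

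The second step is an elementary estimate to turn this into a bound on $\alpha(G)$. Writing $\alpha = \alpha(G)$ for brevity and using the basic ceiling inequality $\ceil{x} < x + 1$, I would bound $\ceil{2\sqrt{\alpha} - 1} < (2\sqrt{\alpha} - 1) + 1 = 2\sqrt{\alpha}$. Chaining this with the displayed inequality yields $\alpha < 2\sqrt{\alpha}$, hence $\sqrt{\alpha} < 2$ (assuming $\alpha \geq 1$, which holds for any nonempty graph), and therefore $\alpha < 4$. Since $\alpha$ is an integer, this forces $\alpha(G) \leq 3$, as claimed.

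There is no real obstacle here; the only point deserving a sanity check is that $\alpha = 3$ genuinely survives the inequality while $\alpha = 4$ does not, so that the bound $\alpha(G) \leq 3$ is the sharp conclusion reachable by this argument rather than something stronger. Indeed $\ceil{2\sqrt{3} - 1} = \ceil{2.464\ldots} = 3 \geq 3$, whereas $\ceil{2\sqrt{4} - 1} = 3 < 4$, confirming that the inequality $\alpha \leq \ceil{2\sqrt{\alpha} - 1}$ fails precisely once $\alpha$ reaches $4$. This matches the earlier observation (via $K_{1,n-1}$, where $\alpha = n-1$) that Proposition \ref{alphabound} is tight, so one should not expect to push the conclusion below $3$ using only this bound.
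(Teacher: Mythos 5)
Your proof is correct and follows essentially the same route as the paper: both arguments combine Proposition \ref{alphabound} with the elementary fact that $x - \ceil{2\sqrt{x}-1} \geq 1$ once the integer $x$ reaches $4$ (the paper phrases this via the contrapositive, you via the self-referential inequality $\alpha \leq \ceil{2\sqrt{\alpha}-1}$, but the arithmetic is identical). No issues.
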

\begin{proof}
Let $G$ be a graph and define $f(x) = x - \ceil{2 \sqrt{x} - 1}$. So Proposition \ref{alphabound} says that $\thzf(G) \leq n - f(\alpha(G))$. If $x \geq 4$ is an integer, then $f(x) \geq 1$. So if $\alpha(G) \geq 4$, then $\thzf(G) \leq n - f(\alpha(G)) \leq n - 1$. 
\end{proof}
Note that the converse of Corollary \ref{alphaCor} is false. For example, let $G = P_6$. Then $\alpha(G) = 3$ and $\thzf(G) = \ceil{2\sqrt{6} - 1} = 4 < 6 = n$.

\end{section}

\begin{section}{Concluding Remarks}\label{conclusion}
For a graph $G$ and an integer $k$, define the \textsc{Z Floor Throttling} problem as the decision problem of determining whether $\thzf(G) < k$. The complexity of \textsc{Z Floor Throttling} is an interesting question. Recall that for two graphs $G_1$ and $G_2$, the graph $G_1 \dot \cup G_2$ has vertex set and edge set equal to $V(G_1) \dot \cup V(G_2)$ and $E(G_1) \dot \cup E(G_2)$ respectively. For any graph $G$, let $X(G)$ be the set of subsets of $V(G)$ that are $\zf$ forcing sets of $G$. A list of conditions is given in \cite[Theorem 1]{powerdomthrot} that would guarantee that \textsc{Z Floor Throttling} is NP-Complete. One of these conditions is that $X(G_1 \dot \cup G_2) = \{S_1 \dot \cup S_2 \ | \ S_1 \in X(G_1) \text{ and }S_2 \in X(G_2)\}$ for any two graphs $G_1$ and $G_2$. Due to hopping, this condition is not satisfied for $\zf$ forcing sets. For example, let $G_1$ and $G_2$ each be the graph consisting of a single vertex labeled $v_1$ and $v_2$ respectively. Let $S_1 = \emptyset$ and $S_2 = \{v_2\}$. Note that $S_1 \dot \cup S_2$ is a $\zf$ forcing set of $G_1 \dot \cup G_2$ since $v_2$ can force $v_1$ by a hop. However, $S_1$ is not a $\zf$ forcing set of $G_1$. So the conditions given in \cite[Theorem 1]{powerdomthrot} cannot be used to prove that \textsc{Z Floor Throttling} is NP-Complete. It would be useful to have other tools to help determine the complexity of the \textsc{Z Floor Throttling} problem.

Corollary \ref{alphaCor} states that a low independence number is necessary in order to achieve a maximum $\zf$ throttling number. Another possible direction for future work is to completely characterize high $\zf$ throttling numbers. It would also be interesting to determine the exact relationship between $\alpha$ and $\thzf$. It is noted in \cite[Remark 2.47]{Parameters} that for any graph $G$, $\floor{Z_{\ell}}(G) \leq \zf(G) \leq \floor{Z_{\ell}}(G) + 1$. This motivates a comparison of $\thzf$ and $\operatorname{th}_{\floor{\Z_{\ell}}}$. If $\thzf$ and $\operatorname{th}_{\floor{\Z_{\ell}}}$ can be arbitrarily far apart, then studying $\floor{\Z_{\ell}}$ throttling may be of interest on its own. 
\end{section}

\section*{Acknowledgment}

Research supported in part by Holl Chair funds.

\end{document}